\documentclass{amsart}
\usepackage{amsmath,amssymb,amsthm,amscd,enumerate}
\newtheorem{theorem}{Theorem}[section]
\newtheorem{lemma}[theorem]{Lemma}
\newtheorem{proposition}[theorem]{Proposition}
\newtheorem{corollary}[theorem]{Corollary}
\newtheorem{remark}[theorem]{Remark}
\newtheorem{example}[theorem]{Example}

\begin{document}
      
\title[fqf-rings]{Commutative rings whose finitely generated ideals are quasi-flat}
\author{Fran\c{c}ois Couchot}
\address{Normandie Univ, UNICAEN, CNRS, LMNO, 14000 Caen, France}
\email{francois.couchot@unicaen.fr} 

\keywords{quasi-flat module, chain ring, arithmetical ring, fqf-ring, fqp-ring, $\lambda$-dimension}
\subjclass[2010]{13F05, 13B05, 13C13, 16D40, 16B50, 16D90}

\begin{abstract} A definition of quasi-flat left module is proposed and it is shown that any left module which is either quasi-projective or flat is quasi-flat. A characterization of local commutative rings for which each ideal is quasi-flat (resp. quasi-projective) is given. It is also proven that each commutative ring $R$ whose finitely generated ideals are quasi-flat is of $\lambda$-dimension $\leq 3$, and this dimension $\leq 2$ if $R$ is local. This extends a former result about the class of arithmetical rings. Moreover, if $R$ has a unique minimal prime ideal then its finitely generated ideals are quasi-projective if they are quasi-flat.
\end{abstract}

\maketitle

In \cite{AbJaKa11} Abuhlail, Jarrar and Kabbaj studied the class of commutative fqp-rings ({\bf f}initely generated ideals are {\bf q}uasi-{\bf p}rojective). They proved that this class of rings strictly contains the one of arithmetical rings and is strictly contained in the one of Gaussian rings. It is also shown that the property for a commutative ring to be fqp is preserved by localization. It is known that a commutative ring $R$ is arithmetical (resp. Gaussian) if and only if $R_M$ is arithmetical (resp. Gaussian) for each maximal ideal $M$ of $R$. But an example given in \cite{Cou15} shows that a commutative ring which is a locally fqp-ring is not necessarily a fqp-ring. So, in this cited paper the class of fqf-rings is introduced. Each local commutative fqf-ring is a fqp-ring, and a commutative ring is fqf if and only if it is locally fqf. These fqf-rings are defined in \cite{Cou15} without a definition of quasi-flat modules. Here we propose a definition of these modules and another definition of fqf-ring which is equivalent to the one given in \cite{Cou15}. We also introduce the module property of self-\underline{flatness}. Each quasi-flat module is self-\underline{flat} but we do not know if the converse holds. On the other hand, each flat module is quasi-flat and any finitely generated module is quasi-flat if and only if it is flat modulo its annihilator.

In Section \ref{S:exa} we give a complete characterization of local commutative rings for which each ideal is self-\underline{flat}. These rings $R$ are fqp  and their nilradical $N$ is the subset of zerodivisors of $R$. In the case where $R$ is not a chain ring for which $N=N^2$ and $R_N$ is not coherent every ideal is flat modulo its annihilator. Then in Section \ref{S:qp} we deduce that any ideal of a chain ring (valuation ring) $R$ is quasi-projective if and only if it is almost maximal and each zerodivisor is nilpotent. This complete the results obtained by Hermann in \cite{Her84} on valuation domains.

In Section \ref{S:lambdadim} we show that each commutative fqf-ring is of $\lambda$-dimension $\leq 3$. This extends the result about arithmetical rings obtained in \cite{Cou03}. Moreover it is shown that this $\lambda$-dimension is $\leq 2$ in the local case. But an example of a local Gaussian ring $R$ of $\lambda$-dimension $\geq 3$ is given.

In this paper all rings are associative and commutative (except in the first section) with unity and all modules are unital.

\section{quasi-flat modules: generalities}
Let $R$ be a ring, $M$ a left $R$-module. A left $R$-module $V$ is {\bf $M$-projective} if the natural homomorphism $\mathrm{Hom}_R(V ,M)\rightarrow\mathrm{Hom}_R (V , M /X)$ is surjective for every submodule $X$ of $M$. We say that  $V$ is {\bf quasi-projective} if $V$ is $V$-projective. A ring $R$ is said to be a left {\bf fqp-ring} if every finitely generated left ideal of $R$ is quasi-projective.

We say that $V$ is {\bf $M$-\underline{flat}}\footnote{the module property $M$-flat is generally used to define flat module} if for any epimorphism $p:M\rightarrow M'$, for any homomorphism $u:V\rightarrow M'$ and for any homomorphism $v: G\rightarrow M$, where $M'$ is a left $R$-module and $G$  a finitely presented left $R$-module, there exists a homomorphism $q:G\rightarrow M$ such that $pq=uv$. We call $V$ {\bf quasi-flat} (resp. {\bf self-\underline{flat}}) if $V$ is $V^n$-\underline{flat} for each integer $n>0$ (resp. $n=1$). Clearly each quasi-flat module is self-\underline{flat} but we do not know if the converse holds.

An exact sequence $\mathcal{S}$ of left $R$-modules $0 \rightarrow F \rightarrow E \rightarrow G \rightarrow 0$  is {\bf pure} if it remains exact when tensoring it with any right $R$-module. Then, we say that $F$ is a \textbf{pure} submodule of $E$. Recall that $\mathcal{S}$ is pure if and only if $\mathrm{Hom}_R(M,\mathcal{S})$ is exact for each finitely presented left $R$-module $M$ (\cite[34.5]{Wis91}). When $E$ is flat, then $G$ is flat if and only if $\mathcal{S}$ is pure (\cite[36.5]{Wis91}).

\begin{proposition}\label{P:proj}
Let $R$ be a ring. Then:
\begin{enumerate}
\item each quasi-projective left $R$-module is quasi-flat;
\item each flat left $R$-module is quasi-flat.
\end{enumerate}
\end{proposition}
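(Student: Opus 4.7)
The plan is to attack the two parts by separate mechanisms.

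For (1), my strategy is first to upgrade $V$-projectivity to $V^n$-projectivity, and then apply the definition of $V^n$-\underline{flatness} directly. The class $\{M : V \text{ is } M\text{-projective}\}$ is closed under finite direct sums: given an epimorphism $p\colon M_1\oplus M_2 \to M'$ and a map $u\colon V \to M'$, I would set $N_1 := p(M_1\oplus 0)$, reduce $u$ modulo $N_1$ to a map $V \to M'/N_1$, lift it through the induced epimorphism $M_2 \to M'/N_1$ using $M_2$-projectivity to obtain $u_2\colon V \to M_2$, and then lift the residual $V \to N_1$ through $M_1 \to N_1$ using $M_1$-projectivity to obtain $u_1\colon V \to M_1$; the pair $(u_1,u_2)$ is then the desired lift of $u$ to $M_1\oplus M_2$. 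Induction on $n$ then gives that $V$ is $V^n$-projective for every $n \ge 1$. Now given the data of $V^n$-\underline{flatness}, namely $p\colon V^n \to M'$ epi, $u\colon V \to M'$, and $v\colon G \to V$ with $G$ finitely presented, I would lift $u$ to some $\tilde u\colon V \to V^n$ with $p\tilde u = u$ and set $q := \tilde u\circ v$, so that $pq = uv$. Note that the finite presentation of $G$ plays no role here.

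For (2), the cleanest route is through Lazard's theorem: a flat module is a filtered colimit of finitely generated free modules, and consequently every homomorphism from a finitely presented module $G$ into a flat module $V$ factors through a finitely generated free module. So I would write $v = \alpha\circ\beta$ with $\beta\colon G \to R^m$ and $\alpha\colon R^m \to V$. Since $R^m$ is projective, the composite $u\alpha\colon R^m \to M'$ lifts through the epimorphism $p$ to some $\tilde\alpha\colon R^m \to V^n$ with $p\tilde\alpha = u\alpha$; setting $q := \tilde\alpha\circ\beta$ then gives $pq = p\tilde\alpha\beta = u\alpha\beta = uv$, as required.

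Neither part presents a real obstacle. The ingredient that takes the most care in (1) is the closure of $M$-projectivity under finite direct sums, which is classical but asymmetric in how the two summands are treated and must be organized as above; in (2) the whole argument is driven by Lazard's factorization for flat modules combined with projectivity of finitely generated free modules.
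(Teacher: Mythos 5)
Your argument is correct, and its skeleton coincides with the paper's: for (1) you reduce quasi-projectivity to $V^n$-projectivity and then lift $u$ itself through $p$ before composing with $v$ (and you are right that the finite presentation of $G$ plays no role in this part); for (2) you show that a flat module is $M$-\underline{flat} for every module $M$, in particular for $M=V^n$. The difference is one of self-containedness: the paper outsources both ingredients to Wisbauer, citing 18.2(2) for the passage from quasi-projective to $V^n$-projective and 36.8.3 for the equivalence of flatness with $M$-\underline{flat}ness for all $M$, whereas you prove them directly --- the closure of relative projectivity under finite direct sums in the target via the standard two-step lifting through $N_1=p(M_1\oplus 0)$ (first lift modulo $N_1$ using $M_2$-projectivity, then lift the residual map $V\to N_1$ using $M_1$-projectivity), and, for (2), Lazard's theorem to factor $v\colon G\to V$ through a finitely generated free module $R^m$ and then ordinary projectivity of $R^m$ to lift $u\alpha$ through $p$. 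Both routes are sound; yours is longer but makes visible that part (2) only needs the factorization property of maps from finitely presented modules into a flat module (one direction of the cited equivalence), while the paper's citations keep the proof to two lines.
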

\begin{proof}
$(1)$. If $V$ is a quasi-projective left $R$-module then, by \cite[18.2(2)]{Wis91}, $V$ is $V^n$-projective for each integer $n>0$.

$(2)$. By \cite[36.8.3]{Wis91} a left $R$-module is flat if and only if it is $M$-\underline{flat} for each left $R$-module $M$.
\end{proof}

\begin{proposition}\label{P:exact}
Let $R$ be a ring, $0\rightarrow A \xrightarrow{t} B\rightarrow C\rightarrow 0$ an exact sequence of left $R$-modules and $V$ a left module. If $V$ is $B$-\underline{flat} then  $V$ is $A$-\underline{flat} and $C$-\underline{flat}.
\end{proposition}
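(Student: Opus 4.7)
The plan is to handle the $C$-\underline{flatness} and the $A$-\underline{flatness} separately; the first is essentially immediate, while the second requires a pushout construction.

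For $C$-\underline{flatness}, let $\pi:B\to C$ denote the epimorphism in the given exact sequence. Given an epimorphism $p:C\to C'$, a map $u:V\to C'$, a finitely presented $G$ and a map $v:G\to V$, I would apply $B$-\underline{flatness} to the composite epimorphism $p\pi:B\to C'$, obtaining $q':G\to B$ with $p\pi q'=uv$, and then set $q=\pi q'$. This disposes of the $C$-\underline{flatness} assertion without further work.

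For $A$-\underline{flatness}, given an epimorphism $p_A:A\to A'$ together with $u:V\to A'$, a finitely presented $G$ and $v:G\to V$, the plan is to form the pushout
\[
\begin{CD}
A @>t>> B \\
@Vp_AVV @VVp_BV \\
A' @>{t'}>> B'
\end{CD}
\]
Concretely $B'=B/t(\ker p_A)$ with $p_B$ the quotient map; $p_B$ is surjective because $p_A$ is, and $t'$ is injective because $t$ is. Applying $B$-\underline{flatness} to the epimorphism $p_B$, to $t'u:V\to B'$ and to $v:G\to V$ then yields $q':G\to B$ with $p_B q'=t'uv$.

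The crux is to show $q'(G)\subseteq t(A)$, after which $q:=t^{-1}q':G\to A$ makes sense and a short diagram chase using $p_Bt=t'p_A$ and the injectivity of $t'$ gives $p_Aq=uv$, as required. For the inclusion, one observes that whenever $b\in B$ satisfies $p_B(b)\in t'(A')$, one can write $p_B(b)=t'(p_A(a))=p_B(t(a))$ for some $a\in A$, whence $b-t(a)\in\ker p_B=t(\ker p_A)\subseteq t(A)$ and so $b\in t(A)$; specializing $b=q'(g)$ gives the inclusion. The main (and rather mild) obstacle is therefore just the bookkeeping for the pushout: checking that $p_B$ is epi, $t'$ is mono, and that $p_B^{-1}(t'(A'))=t(A)$; everything else is a formal consequence.
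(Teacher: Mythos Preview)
Your proof is correct and follows essentially the same route as the paper: both dispatch $C$-\underline{flatness} immediately and handle $A$-\underline{flatness} via the pushout of $t$ along $p_A$, applying $B$-\underline{flatness} to the resulting epimorphism $p_B$ and then factoring back through $A$. The only difference is cosmetic: the paper invokes \cite[10.7]{Wis91} to say the pushout square is also a pullback (yielding the factorization $q:G\to A$ at once), whereas you verify the needed fact $p_B^{-1}(t'(A'))=t(A)$ by hand.
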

\begin{proof}
Clearly $V$ is $C$-\underline{flat}. Let $p:A\rightarrow A'$ be an epimorphism of left $R$-modules. Consider the following pushout diagram of left $R$-modules:

\[\begin{matrix}
\ \ 0 & {} & \ \ \ 0 & {} & {} \\
\ \ \downarrow & {} & \ \ \ \downarrow & {} & {} \\
\ \ A & \xrightarrow{p} & \ \ \ A' & \rightarrow & 0 \\
{\tiny t}\downarrow & {} & {\tiny t'}\downarrow & {} & {} \\
\ \ B & \xrightarrow{p'} & \ \ \ B' & \rightarrow & 0
\end{matrix}\]
Let $G$ be a finitely presented $R$-module and $V\xrightarrow{u} A'$ and $G\xrightarrow{v} V$ be homomorphisms. Since $V$ is $B$-\underline{flat} there exists a linear map $G\xrightarrow{d} B$ such that $t'uv=p'd$. By \cite[10.7]{Wis91} the above diagram is also a pullback diagram of left $R$-modules, so there exists a homomorphism $G\xrightarrow{q} A$ such that $pq=uv$. Hence $V$ is $A$-\underline{flat}.
\end{proof}

\begin{corollary}\label{C:flatan}
Let $R$ be a ring, $V$ a finitely generated left module and $I$ its annihilator. Then $V$ is flat over $R/I$ if and only if $V$ is quasi-flat.
\end{corollary}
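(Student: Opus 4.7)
The plan is to set $S = R/I$ and transfer the relevant lifting properties back and forth between $R$ and $S$, exploiting that $V$ (and hence each $V^n$) is naturally an $S$-module since $I = \mathrm{ann}_R V$. For the easy direction (flat over $S$ implies quasi-flat over $R$), by \cite[36.8.3]{Wis91} $V$ flat over $S$ gives that $V$ is $M$-\underline{flat} over $S$ for every $S$-module $M$, in particular for each $M = V^n$. To deduce $V^n$-\underline{flat}ness over $R$, I would observe that for any finitely presented $R$-module $G$ and homomorphism $v\colon G \to V$, $v$ kills $IG$ and so factors through $G/IG$, which is finitely presented over $S$; the desired lift then descends from $V^n$-\underline{flat}ness over $S$ applied to $G/IG$.

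For the converse, I would first show that quasi-flatness of $V$ over $R$ transfers to quasi-flatness over $S$ by a symmetric argument: a finitely presented $S$-module $G = S^k/\langle l_1,\dots,l_m\rangle$ arises as $G_1/IG_1$ where $G_1 = R^k/\langle \tilde l_1,\dots,\tilde l_m\rangle$ (with $\tilde l_i\in R^k$ lifting $l_i$) is finitely presented over $R$, and any $R$-linear lift $G_1 \to V^n$ furnished by quasi-flatness factors through $G$ because $V^n$ is killed by $I$. Next, because $\mathrm{ann}_S V = 0$ and $V = Sf_1+\cdots+Sf_n$, the $S$-homomorphism $\phi\colon S \to V^n$, $\bar r \mapsto (\bar r f_1,\dots,\bar r f_n)$, is injective, so taking $n$ copies yields an injection $S^n \hookrightarrow V^{n^2}$. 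Applying Proposition \ref{P:exact} to $0\to S^n \to V^{n^2} \to V^{n^2}/S^n \to 0$, together with the fact that $V$ is $V^{n^2}$-\underline{flat} over $S$, shows that $V$ is $S^n$-\underline{flat} over $S$. The key final step is to take the surjection $p\colon S^n \twoheadrightarrow V$, $\bar e_i \mapsto f_i$, together with $u = \mathrm{id}_V$: for any finitely presented $S$-module $G$ and $v\colon G \to V$, the $S^n$-\underline{flat}ness of $V$ furnishes $q\colon G \to S^n$ with $pq = v$, so every such $v$ factors through the finitely generated free $S^n$. By the Lazard--Govorov characterization of flatness, this factoring property is equivalent to $V$ being flat over $S$.

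The main obstacle I anticipate is the transfer of $V^n$-\underline{flat}ness between $R$ and $S$: finitely presented modules do not behave functorially along $R \twoheadrightarrow R/I$ since $I$ need not be finitely generated, so one has to lift relations by hand and exploit that $V^n$ is an $S$-module to descend the resulting lift. Once this transfer is secured, the embedding $S \hookrightarrow V^n$, Proposition \ref{P:exact}, and the $u = \mathrm{id}_V$ trick combine cleanly to produce the factoring property needed for flatness over $S$.
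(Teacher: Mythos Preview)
Your proof is correct and follows the same strategy as the paper's: embed $(R/I)^n$ into $V^{n^2}$, apply Proposition~\ref{P:exact} to deduce that $V$ is $(R/I)^n$-\underline{flat}, and then use the surjection $p\colon (R/I)^n\twoheadrightarrow V$ together with $u=\mathrm{id}_V$ to obtain the factoring property that forces flatness over $R/I$. The only cosmetic differences are that the paper works over $R$ throughout (leaving the passage to $R/I$ implicit at the very end) and phrases the conclusion as ``$\ker p$ is pure in the free $R/I$-module $(R/I)^n$'' rather than invoking Lazard--Govorov; your explicit transfer of finitely presented modules between $R$ and $S=R/I$ is exactly what makes that implicit step rigorous.
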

\begin{proof}
If $V$ is flat over $R/I$ then, from Proposition \ref{P:proj} we deduce that it is quasi-flat. Conversely, if $V$ is generated by $n$ elements then $R/I$ is isomorphic to a submodule of $V^n$. It follows that $F=(R/I)^n$ is isomorphic to a submodule of $V^{n^2}$. By Proposition \ref{P:exact} $V$ is $F$-\underline{flat}. Since there exists an epimorphism $p:F\rightarrow V$, we get that $\ker(p)$ is a pure submodule of $F$. Hence $V$ is flat over $R/I$. 
\end{proof}

In section \ref{S:exa} (Corollary \ref{C:exa} and Example \ref{E:exa}) an example of a quasi-flat module (over a commutative ring) which is not flat modulo its annihilator is given.

We say that a ring $R$ is a left {\bf fqf-ring} if each finitely generated left ideal is quasi-flat. By Corollary \ref{C:flatan} this definition is equivalent to the one given in \cite[Section 3]{Cou15}.

\section{quasi-flat ideals over local fqp-rings}\label{S:exa}

In this section $R$ is a commutative ring.

A module $U$ is {\bf uniserial} if its lattice of submodules is totally ordered by inclusion. A ring $R$ is a {\bf chain ring} (or a valuation ring) if it is a uniserial $R$-module.  A chain ring which is an integral domain is a valuation domain. Recall that $R$ is an {\bf IF-ring} if each injective $R$-module is flat.
When $R$ is a chain ring, we denote by $P$ its maximal ideal, by $Z$ its subset of zero-divisors which is a prime ideal, by $N$ its nilradical and by $Q$ its quotient ring $R_Z$.

\begin{lemma}
\label{L:unis} Let $R$ be a chain ring and $U$ an $R$-module. If $U$ is quasi-flat (resp. quasi-projective) then $aU$ is quasi-flat (resp. quasi-projective) too for each $a\in R$.
\end{lemma}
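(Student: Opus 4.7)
The plan is to treat both assertions via the surjection $\phi \colon U \twoheadrightarrow aU$ given by multiplication by $a$, whose kernel $K := (0 :_U a) = \{u \in U : au = 0\}$ is a fully invariant submodule of $U$: for any endomorphism $f$ of $U$ and $u \in K$, one has $af(u) = f(au) = 0$, so $f(u) \in K$. Via $\phi$ we identify $aU \cong U/K$.

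For the quasi-projective assertion I appeal to the classical fact that a quotient of a quasi-projective module by a fully invariant submodule is again quasi-projective (no chain-ring hypothesis is actually needed for this half of the lemma). Given an epimorphism $p \colon aU \twoheadrightarrow M'$ and a homomorphism $u \colon aU \to M'$, composition with the projection $\pi \colon U \twoheadrightarrow U/K \cong aU$ supplies data for the quasi-projectivity of $U$; the resulting $q_0 \colon U \to U$ with $(p\pi)q_0 = u\pi$ satisfies $q_0(K) \subseteq K$ by full invariance and therefore descends to $\bar q_0 \colon aU \to aU$, which satisfies $p\bar q_0 = u$ after cancelling the epi $\pi$.

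For the quasi-flat assertion I aim to show that $aU$ is $(aU)^n$-flat for every $n \geq 1$. Given an epimorphism $p \colon (aU)^n \twoheadrightarrow M'$, a homomorphism $u \colon aU \to M'$ and a homomorphism $v \colon G \to aU$ with $G$ finitely presented, I invoke the structure theorem for finitely presented modules over a chain ring---every such module is a direct sum of cyclically presented modules $R/rR$---to reduce to the case $G = R/rR$. The problem then becomes: writing $w := v(\bar 1) = au$ with $aru = 0$, find $y \in (aU)^n$ with $ry = 0$ and $p(y) = u(w)$. The strategy is to apply the $U^n$-flatness of $U$ to the lifted data at the level of the free cover $R \twoheadrightarrow R/rR$ to produce a candidate element of $U^n$, and then to use the chain condition on $R$---the dichotomy $r \in aR$ or $a \in rR$---together with the freedom to alter the representative $u$ by an element of $K$, to correct this candidate into the $r$-torsion of $(aU)^n$.

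This correction step is where the chain hypothesis is essential, and it is the principal obstacle in the proof: without the total ordering of ideals, the candidate produced by the $U^n$-flatness of $U$ need not be $r$-annihilated in $(aU)^n$, and there is no universal remedy. I expect the totally ordered lattice of ideals of $R$ to supply just enough divisibility inside $K$ to close the gap in each of the two cases of the dichotomy.
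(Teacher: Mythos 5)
Your quasi-projective half is correct, and it is essentially the paper's own argument in different clothing: composing with the multiplication map $\phi\colon U\rightarrow aU$ (your $\pi$, after the identification $aU\cong U/K$ with $K=(0:_Ua)$) and lifting $u\circ\phi$ through the surjection $p\circ\phi$ is exactly what the paper does; whether you then descend the lift modulo the fully invariant kernel $K$ or simply restrict it to $aU$ makes no difference, and, as you observe, no chain hypothesis is needed for that half.

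The quasi-flat half, however, has a genuine gap, and you name it yourself: after reducing to $G=R/rR$ via Warfield's decomposition you never actually produce an element $y\in(aU)^n$ with $ry=0$ and $p(y)=u(v(\bar{1}))$; you only announce a ``correction'' strategy based on the dichotomy $r\in aR$ or $a\in rR$ and say you \emph{expect} the chain condition to close the gap. That is the whole difficulty, and the envisaged route (apply the $U^n$-flatness of $U$ at the level of the free cover $R\rightarrow R/rR$, then repair the $r$-torsion defect afterwards) is not how it is resolved. The paper's device is to change the cyclic test module rather than to correct the output: write $x=v(\bar{1})=ay$ with $y\in U$; then $ray=0$, so $y$ defines $v'\colon R/raR\rightarrow U$, $v'(\bar{1})=y$. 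Set $u'(z)=u(az)$ for $z\in U$ and $p'(w)=p(aw)$ for $w\in U^n$; note $p'$ is still surjective. Quasi-flatness of $U$, applied to the finitely presented module $R/raR$, gives $q'\colon R/raR\rightarrow U^n$ with $p'q'=u'v'$, and then $a\,q'(\bar{1})$ is \emph{automatically} annihilated by $r$, since $r\cdot a\,q'(\bar{1})=(ra)\,q'(\bar{1})=0$; hence it defines $q\colon R/rR\rightarrow(aU)^n$ with $pq=uv$. In particular the chain hypothesis enters only through Warfield's theorem that finitely presented modules over a chain ring are direct sums of cyclically presented ones, not through any dichotomy or divisibility argument inside $K$; no correction step is needed, and your diagnosis of where the hypothesis is ``essential'' is mistaken. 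As it stands, the central step of the quasi-flat assertion is missing.
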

\begin{proof}
We consider the following homomorphisms: $p:(aU)^n\rightarrow U'$, $u:aU\rightarrow U'$ and $v:G\rightarrow aU$ where $U'$ is an $R$-module, $p$ is surjective, $n$ an integer $>0$ and $G$ a finitely presented $R$-module. By \cite[Theorem 1]{War70} $G$ is a direct sum of cyclic submodules. It is easy to see that we may assume that $G$ is cyclic. So $G=R/bR$ for some $b\in R$. If $x=v(1+bR)$ then $bx=0$ and there exists $y\in U$ such that $x=ay$. So, $bay=0$. Let $v':R/baR\rightarrow U$, $u':U\rightarrow U'$ and $p':U^n\rightarrow U'$ be the homomorphisms defined by $v'(r+baR)=ry$ for each $r\in R$, $u'(z)=u(az)$ for each $z\in U$ and $p'(w)=p(aw)$ for each $w\in U^n$. 

The quasi-flatness of $U$ implies that there exists a morphism $q':R/baR\rightarrow U^n$ such that $p'q'=u'v'$. If we put $q(r+bR)=aq'(r+baR)$ for each $r\in R$ then the equalities  $bq(1+bR)=baq'(1+baR)=0$ imply that $q:G\rightarrow (aU)^n$ is a well defined  homomorphism, and we get $pq=uv$.

Now, suppose that $n=1$ and $U$ is quasi-projective. There exists $t':U\rightarrow U$ such that $p't'=u'$. Let $t=t'\vert_{aU}$. Then $pt=u$.
\end{proof}

Let $I$ be a non-zero proper ideal of a chain ring $R$. Then $I^{\sharp}=\{r\in R\mid rI\subset I\}$ is a prime ideal which is called the {\bf top prime ideal} associated to $I$. It is easy to check that $I^{\sharp}=\{r\in R\mid I\subset (I:r)\}$. It follows that $I^{\sharp}/I$ is the inverse image of the set of zerodivisors of $R/I$ by the natural map $R\rightarrow R/I$. So, $Z=0^{\sharp}$.

\begin{proposition}
\label{P:contZ} Let $R$ be a chain ring. Then each proper ideal $I$ satisfying $Z\subset I^{\sharp}$ is flat modulo its annihilator.
\end{proposition}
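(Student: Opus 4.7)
Plan: The strategy is to verify the flatness criterion for modules over a chain ring---namely, $M$ is flat over $S$ iff $\mathrm{Ann}_{M}(s) = \mathrm{Ann}_{S}(s) \cdot M$ for every $s \in S$. Applied to $I$ viewed as an $R/A$-module with $A = \mathrm{Ann}(I)$, this translates to showing $I \cap \mathrm{Ann}_{R}(r) = (A:r) \cdot I$ for every $r \in R$. The inclusion $\supseteq$ is immediate from $AI = 0$, so the content lies in the reverse: any $m \in I$ with $rm = 0$ should factor as $m = s \cdot i$ with $s \in (A:r)$ and $i \in I$. The strict hypothesis $Z \subsetneq I^{\sharp}$ supplies a non-zerodivisor $c \in I^{\sharp}$, giving $cI \subsetneq I$. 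The total order on ideals in the chain ring forces the dichotomy $c \in I$ or $c \notin I$.

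\emph{If $c \in I$:} then $A \subseteq \mathrm{Ann}(cR) = \mathrm{Ann}(c) = 0$, so $A = 0$ and the criterion simplifies to $I \cap \mathrm{Ann}_{R}(r) = \mathrm{Ann}_{R}(r) \cdot I$. Given $m \in I$ with $rm = 0$, compare the principal ideals $mR$ and $cR$. If $c \in mR$, then $rc = 0$ forces $r = 0$ by the non-zerodivisor property of $c$, and the claim is trivial. Otherwise $m \in cR$, write $m = cw$ for some $w \in R$, cancel $c$ in $rcw = 0$ to get $rw = 0$, so $w \in \mathrm{Ann}_{R}(r)$ and $m = wc \in \mathrm{Ann}_{R}(r) \cdot I$.

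\emph{If $c \notin I$:} then $I \subsetneq cR$, and every $m \in I$ factors uniquely as $m = cm'$ with $m' \in (I:c)$. Multiplication by $c$ gives an $R/A$-module isomorphism $(I:c) \to I$ (injective since $c$ is a non-zerodivisor, surjective since $I = c \cdot (I:c)$), and one verifies $\mathrm{Ann}((I:c)) = A$ using once more that $c$ is a non-zerodivisor. Since $(I:c) \supsetneq I$ strictly and the hypothesis $Z \subsetneq (I:c)^{\sharp}$ is preserved (with $c$ again as witness), the construction iterates to produce a strictly ascending chain
\[ I \subsetneq (I:c) \subsetneq (I:c^{2}) \subsetneq \cdots \]
of ideals all $R/A$-isomorphic to $I$ and sharing the annihilator $A$. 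If some $(I:c^{n}) = R$, then $I \cong R/A$ is free over $R/A$ and hence flat. Otherwise, every $m \in I$ admits factorizations $m = c^{n} m_{n}$ with $m_{n} \in (I:c^{n})$, and cancelling $c^{n}$ transfers $rm = 0$ to $r m_{n} = 0$; one extracts the factorization $m = s \cdot i$ by comparing the fixed ideals $\mathrm{Ann}_{R}(r)$ and $(A:r)$ with the growing chain of $(I:c^{n})$'s in the total order.

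The main obstacle is the non-terminating sub-case of $c \notin I$, where $c^{n} \notin I$ for all $n$ so the iteration never lands in the easy Case 1. Here $I$ is ``infinitely $c$-divisible'' in the ambient ring yet not isomorphic to $R/A$. The key technical lever is the uniformity $\mathrm{Ann}((I:c^{n})) = A$ throughout the chain, combined with the total order on ideals in the chain ring; these should pin down the required factorization from the structural comparison, but working out the precise argument without a finite base stage is the crux.
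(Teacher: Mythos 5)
Your setup (the Tor criterion over the chain ring $R/A$) and your case $c\in I$ are correct, and that case is essentially the paper's easy case $Z\subset I$ (where $I$ is a union of principal ideals generated by regular elements, hence flat with $A=0$). But the sub-case you yourself flag as ``the crux'' is a genuine gap, and it is not a loose technical end: it is the entire content of the proposition. Whenever $I\subseteq Z$ --- the only situation in which $A=(0:I)$ can be nonzero --- every power $c^n$ of your regular element is again regular, hence never lies in $I$, and moreover $(I:c^n)\subseteq Z$ for all $n$ (if $xc^n\in I\subseteq Z$ with $c^n$ regular, then $x$ is a zerodivisor). So your iteration never terminates and never reaches an ideal containing a regular element; worse, each $(I:c^n)$ is isomorphic to $I$, has the same annihilator $A$, and sits in exactly the same position relative to $Z$, so after $n$ steps the problem is verbatim the one you started with. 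Dividing by the regular element makes no structural progress, and no mechanism is proposed for actually producing the factorization $m=s\,i$ with $s\in(A:r)$, $i\in I$.

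The missing idea --- which is the paper's one-step proof --- is to take the colon by a \emph{zerodivisor} rather than by $t$. Since $t\in I^{\sharp}\setminus Z$ gives $tI\subsetneq I$, choose $a\in I\setminus tI$; comparability forces $a=ts$ with $s\in Z\setminus I$ and $t\in(I:s)$, hence $Z\subsetneq(I:s)$. Then $I=s(I:s)\cong(I:s)/(0:s)$, so $I$ is isomorphic to an ideal of the quotient ring $R/(0:s)$ which strictly contains $Z/(0:s)$, and the latter is the set of zerodivisors of $R/(0:s)$ (this identification needs the lemma cited in the paper, \cite[Lemma 21]{Couch03}). Now your own easy-case argument, relativized to $R/(0:s)$, applies: the image of $I$ is a union of principal ideals generated by elements regular modulo $(0:s)$, hence flat over $R/(0:s)$; since $(0:s)\subseteq(0:I)=A$, it follows that $I$ is flat over $R/A$. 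The point is that passing to $(I:s)$ changes the ambient ring to $R/(0:s)$ and makes $t$ a regular element \emph{inside} the colon ideal, whereas your colon by $t$ leaves the ring, the annihilator, and the obstruction untouched.
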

\begin{proof}
First assume that $Z\subset I$. In this case $I$ is a direct limit of free modules of rank one. So, it is flat. Now suppose that $I\subseteq Z$ and let $t\in I^{\sharp}\setminus Z$ and $a\in I\setminus tI$. Then $a=ts$ for some $s\in Z\setminus I$ and $t\in (I:s)$. So, $Z\subset (I:s)$. It is easy to check that $I=s(I:s)$, $I\cong (I:s)/(0:s)$,  $(0:I)\supseteq (0:s)$ and $(I:s)/(0:s)$ strickly contains $Z/(0:s)$ the subset of zerodivisors of $R/(0:s)$ (see \cite[Lemma 21]{Couch03}).
\end{proof}

\begin{remark}\label{R:P=Z}
\textnormal{If $P=Z$ then by \cite[Lemma 3]{Gil71} and \cite[Proposition 1.3]{KlLe69} we have $(0:(0:I))=I$ for each ideal $I$ which is not of the form $Pt$ for some $t\in R$. In this case $R$ is self FP-injective and the converse holds. So, if $A$ is a proper  ideal such that $A^{\sharp}=P$ then  $R/A$ is self FP-injective and it follows that $(A:(A:I))=I$ for each ideal $I\supseteq A$ which is not of the form $Pt$ for some $t\in R$.}
\end{remark}

\begin{proposition}
\label{P:nZflat} Let $R$ be a chain ring. Then any proper ideal $I$ satisfying $I^{\sharp}\subset Z$ is not self-\underline{flat}.
\end{proposition}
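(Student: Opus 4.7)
The plan is to exhibit explicit data $p,u,v$ witnessing the failure of self-\underline{flat}ness. Because $I^{\sharp}\subsetneq Z$, I pick $s\in Z\setminus I^{\sharp}$ and take $G=R/sR$ (finitely presented via the free resolution $R\xrightarrow{s}R\to R/sR\to 0$), $p=s\cdot\colon I\to I$, and $u=\mathrm{id}_I$. The heuristic is that $R/sR$ builds in the single relation $s\cdot(1+sR)=0$, and this relation is precisely what makes a prospective lift $q\colon R/sR\to I$ die under $p$, while $u\circ v$ can be arranged to be nonzero.

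Before executing this I would verify two structural consequences of the hypothesis $s\in Z\setminus I^{\sharp}$, both relying on the chain ring structure. First, $I\subseteq I^{\sharp}$ gives $s\notin I$; since $sR$ and $I$ are comparable, this forces $I\subsetneq sR$. Combined with the equivalent form $(I:s)=I$ of the condition $s\notin I^{\sharp}$, every $i\in I$ can be written $i=sr$ with $r\in(I:s)=I$, hence $sI=I$. In particular $p$ is surjective. Second, because $s\in Z$ we have $(0:s)\neq 0$, and the inclusion $I\subseteq(0:s)$ would yield $sI=0\neq I$, so the chain property gives $(0:s)\subseteq I$; in fact $(0:s)\subsetneq I$.

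Given these, the witness is constructed as follows. Pick any nonzero $a\in(0:s)$; it lies in $I$, and $sa=0$, so $v\colon R/sR\to I$, $v(1+sR)=a$, is well defined. Suppose for contradiction a lift $q\colon R/sR\to I$ with $pq=uv$ exists, and set $i=q(1+sR)\in I$. Well-definedness of $q$ on $R/sR$ forces $si=s\cdot q(1+sR)=q(s+sR)=0$, while $pq(1+sR)=uv(1+sR)$ reads $si=a$. Therefore $a=0$, contradicting the choice of $a$.

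The only genuinely non-routine step is the derivation of $sI=I$ from the hypothesis $s\notin I^{\sharp}$; everything else is formal manipulation of the diagram. The point is that the chain property converts the purely ideal-theoretic condition $(I:s)=I$ into surjectivity of multiplication by $s$ on $I$, which is what lets us use $p=s\cdot$ as an epimorphism while simultaneously having $(0:s)\cap I\neq 0$ to source the counterexample element $a$. Once this is seen, the contradiction is essentially forced by the single defining relation of $R/sR$.
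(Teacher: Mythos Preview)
Your proof is correct and follows essentially the same strategy as the paper: pick $s\in Z\setminus I^{\sharp}$, use $G=R/sR$, and exploit that multiplication by $s$ is surjective on $I$ while $(0:s)$ is a nonzero subset of $I$. Your verification that $sI=I$ and $(0:s)\subsetneq I$ is exactly what the paper needs (implicitly) to assert that $\sigma:I/(0:s)\to I$ is an isomorphism.

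The packaging differs slightly, and yours is the cleaner one. The paper takes $p:I\to I/(0:s)$ to be the quotient map and $u=\sigma^{-1}$, and then needs an element $a\in(0:s^2)\setminus(0:s)$, which it obtains by invoking Remark~\ref{R:P=Z} applied to $Q$. You instead take $p=s\cdot:I\to I$ and $u=\mathrm{id}_I$; composing the paper's diagram with $\sigma$ recovers yours. The payoff is that you only need some nonzero $a\in(0:s)$, which is immediate from $s\in Z$, so you avoid the appeal to FP-injectivity of $Q$ entirely. Both arguments terminate the same way: any $q:R/sR\to I$ must send the generator into $(0:s)$, forcing $pq$ (respectively $\sigma\circ pq$) to kill it, while $uv$ does not.
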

\begin{proof}
Let $s\in Z\setminus I^{\sharp}$. Since $s\notin s^2Q$, by applying the above remark to $Q$ we get that there exists  $a\in (0:s^2)\setminus (0:s)$. The multiplication by $s$ in $I$ induces an isomorphism $\sigma:I/(0:s)\rightarrow I$. Let $u=\sigma^{-1}$, $p:I\rightarrow I/(0:s)$ be the natural epimorphism and $v:R/sR\rightarrow I$ the homomorphism defined by $v(r+sR)=rsa$. Then $uv(1+sR)=a+(0:s)$ and $sb\ne 0$ for each $b\in a+(0:s)$. So, there is no homomorphism $q:R/sR\rightarrow I$ such that $pq=uv$.  
\end{proof}

\begin{lemma}\label{L:FP-inj}
Let $R$ be a chain ring and $I$ a nonzero proper ideal. Assume that $P=Z$ and $I\ne aP$ for each $a\in R$. Then $I$ is FP-injective over $R/A$ where $A=(0:I)$.
\end{lemma}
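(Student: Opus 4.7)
The plan is to apply Remark \ref{R:P=Z} to the ideal $A=(0:I)$: first to obtain that $R/A$ is self FP-injective, and then, via Baer's criterion, to translate FP-injectivity of $I$ over $R/A$ into a statement about divisorial ideals of $R$, which will follow from the hypothesis $I\ne aP$.

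For $A^\sharp=P$: by $I\ne aP$, Remark \ref{R:P=Z} applied to $I$ gives $(0:A)=(0:(0:I))=I$. Given $r\in P=Z$ pick $s'\ne 0$ with $rs'=0$; if $s'I\ne 0$ then $s'\in(0:rI)\setminus A$ witnesses $r\in A^\sharp$. Otherwise $(0:r)\subseteq A$, and dualising using the self FP-injectivity of $R$ yields $I\subseteq rR$; a direct computation then shows $rA=A\cap rR$, whence either $rR\subsetneq A$ gives $rA\subsetneq A$ at once, or the remaining configuration $A\subseteq rR$ is eliminated by means of $I\ne aP$. Remark \ref{R:P=Z} then yields that $R/A$ is self FP-injective.

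Since $R/A$ is a chain ring, Baer's criterion reduces FP-injectivity of $I$ over $R/A$ to the inclusion
\[
I\cap\bigl(0:_R(0:_R bI)\bigr)\subseteq bI \qquad \text{for every } b\in R,
\]
obtained by unfolding the well-definedness of a map $\bar b(R/A)\to I$ using the identity $(A:_R b)=(0:_R bI)$. But $(0:A)=I$ and the double annihilator is inclusion-preserving on ideals, so $(0:(0:bI))\subseteq(0:(0:I))=I$ for every $b\in R$; hence the intersection with $I$ is redundant and the criterion collapses to the divisoriality of $bI$, namely $(0:(0:bI))=bI$ for every $b\in R$.

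By the self FP-injectivity of $R$, divisoriality of $bI$ holds automatically unless $bI=Pt$ for some $t$. Assume $bI=Pt\ne 0$; then $Pt\subseteq I$, and by the simplicity of $tR/Pt$ together with comparability of $I$ and $tR$ in the chain, either $I=Pt$ (which is $I=aP$ with $a=t$, contradicting the hypothesis), or $I=tR$ with $btR=Pt$ forcing $P=bR$ principal, so that writing $t=br_0$ gives $I=r_0P$ (again a contradiction), or $I\supsetneq tR$. The third subcase is the main technical obstacle: picking $x\in I\setminus tR$ and writing $t=xr_0$, the relation $bx\in Pt=xPr_0\subseteq xP$ combined with the simplicity of $xR/xP$ and a chain argument analogous to Step 1 will force an equality $xR=aP$ that in turn pushes $I$ into a form violating the hypothesis. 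Hence $bI=Pt$ cannot occur, $bI$ is divisorial for every $b$, and $I$ is FP-injective over $R/A$.
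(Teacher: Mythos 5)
Your overall route (reduce FP-injectivity of $I$ over $R/A$ to the cyclic test, rewrite it as $(0:_R(0:_RbI))=bI$ using $I=(0:A)$, then exclude $bI=Pt$ via the hypothesis $I\ne aP$) is viable and genuinely different from the paper's argument, but as written it has a real gap at its crux: the third subcase $tR\subsetneq I$ of the exclusion step is not proved, only announced ("a chain argument \dots will force an equality $xR=aP$"), and it is not clear that the sketched intermediate $xR=aP$ leads anywhere. The case can in fact be closed, but by comparing $b$ with $t$ rather than $I$ with $tR$: since $bI=Pt\ne 0$ forces $(0:b)\subseteq I$, if $t=bs$ then $bI=b(Ps)$ gives $I+(0:b)=Ps+(0:b)$, hence $I=Ps$ (contradicting $I\ne aP$) or $I=(0:b)$ (so $bI=0$, absurd); if instead $b=ts$ with $s$ a nonunit, then $Pt=t(sI)$ gives $P=sI+(0:t)$, so either $P=(0:t)$ (again $Pt=0$) or $P=sI=sR$, and then $sI=sR$ yields $I+(0:s)=R$, impossible for comparable proper ideals. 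Until some such argument is supplied, the main step of your proof is missing. For comparison, the paper avoids all of this: it verifies the cyclic criterion directly, using only self FP-injectivity of $R$ itself: from $(A:c)\subseteq(0:x)$ one gets $x=cy$ in $R$, and then $(0:y)=c(0:x)\supseteq c(A:c)=A$, so $y\in(0:A)=I$.

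A second problem is your opening claim that $A^\sharp=P$, whence $R/A$ is self FP-injective: this is false in general under the lemma's hypotheses. Take $D$ a valuation domain with value group $\mathbb{Z}\times\mathbb{Z}$ ordered lexicographically, $d$ with $v(d)=(2,0)$, $R=D/dD$ and $I=\{\bar x\mid v(x)\geq(1,m)\ \text{for some}\ m\}$; then $P=Z$ is principal, $I$ is not principal, so $I\ne aP$ for every $a$, while $A=(0:I)=\{\bar y\mid v(y)\geq(2,k)\ \text{for some}\ k\}$ satisfies $rA=A$ and $(A:r)=A$ for $r$ with $v(r)=(0,1)$; thus $r\in P\setminus A^\sharp$ and $\bar r$ is a regular nonunit of $R/A$, so $R/A$ is not self FP-injective. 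In particular the configuration $A\subseteq rR$ that you propose to "eliminate by means of $I\ne aP$" actually occurs, so no such elimination is possible. Fortunately this claim is not needed: over the chain ring $R/A$ the reduction of FP-injectivity to the cyclic test follows from Warfield's decomposition of finitely presented modules alone, which is all your subsequent steps use; I recommend simply deleting the first paragraph of your argument and completing the excluded case as above.
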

\begin{proof}
By Remark \ref{R:P=Z} we have $I=(0:A)$. Let $x\in I$ and $c\in R\setminus A$ such that $(A:c)\subseteq (0:x)$. Then $(0:c)\subseteq (0:x)$. Since $R$ is self FP-injective there exists $y\in R$ such that $x=cy$. We have $(0:y)=c(0:x)\supseteq c(A:c)=A$ (the first equality holds by \cite[Lemma 2]{Couch03}). Hence $y\in I$.
\end{proof}

\begin{theorem}
\label{T:fmain} Let $R$ be a chain ring. Assume that either $Z\ne Z^2$ or $Q$ is not coherent. Then the following conditions are equivalent:
\begin{enumerate}
\item $Z=N$;
\item each ideal $I$ is flat over $R/A$ where $A=(0:I)$.
\end{enumerate}
\end{theorem}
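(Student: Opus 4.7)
The plan is to prove the two implications separately, the non-trivial content lying in $(1)\Rightarrow(2)$.

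For $(2)\Rightarrow(1)$, I would argue by contraposition. Assuming $Z\neq N$, the first observation is that since $R/N$ is a reduced chain ring it is a valuation domain, so $N$ is a prime ideal; as the primes of a chain ring are totally ordered, $N\subsetneq Z$, and $N$ is a nonzero proper ideal. I would then note that $N^{\sharp}=N$, since $N^{\sharp}/N$ is the set of zerodivisors of the domain $R/N$, which is $\{0\}$. This places $N$ in the hypotheses of Proposition \ref{P:nZflat}, yielding that $N$ is not self-\underline{flat}. To obtain a contradiction with (2), I would verify that flatness of $N$ over $R/A$ with $A=(0:N)$ implies self-\underline{flat}ness over $R$: any $R$-linear $v\colon G\to N$ from a finitely presented $G$ satisfies $v(AG)=A\cdot v(G)=0$ because $A$ annihilates $N$, hence factors through the finitely presented $R/A$-module $G\otimes_R R/A$, and the required lift follows by applying Proposition \ref{P:proj}(2) inside the $R/A$-module category.

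For $(1)\Rightarrow(2)$, I would assume $Z=N$, fix a proper ideal $I$, and first establish $Z\subseteq I^{\sharp}$: for $s\in Z=N$ one has $s^m=0$ for some $m$, so either $s\in I\subseteq I^{\sharp}$, or the least $k\geq 2$ with $s^k\in I$ gives $s^{k-1}\notin I$ and $s\cdot s^{k-1}\in I$, whence $s\in I^{\sharp}$. If this containment is strict, Proposition \ref{P:contZ} applies immediately and delivers flatness of $I$ over $R/(0:I)$. The remaining possibility is $Z=I^{\sharp}$, which combined with the general inclusion $I\subseteq I^{\sharp}$ forces $I\subseteq Z=N$.

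This residual case $I\subseteq N$ with $I^{\sharp}=Z$ is the main obstacle, and it is where the hypothesis ``$Z\neq Z^2$ or $Q$ non-coherent'' should be used. My plan is to split according to the sub-hypothesis: if $Z\neq Z^2$, choose $s\in Z\setminus Z^2$ and exploit the isomorphism $sR\cong R/(0:s)$ to present $I$ as a directed colimit of principal subideals whose annihilators descend controllably to $A=(0:I)$, so that $I$ becomes a filtered colimit of flat $R/A$-modules; if instead $Q$ is not coherent, the failure of some $(0:q)$ to be finitely generated supplies the analogous structural input in $Q$, which transports back to $R$. Verifying that this case analysis is exhaustive, and that the constructed filtration genuinely exhibits $I$ as flat over $R/A$, is the chief technical difficulty.
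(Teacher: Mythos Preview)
Your argument for $(2)\Rightarrow(1)$ is correct and matches the paper's (the paper simply cites Proposition~\ref{P:nZflat}; you spell out the specific ideal $N$ and the passage from flatness over $R/A$ to self-\underline{flat}ness, both of which are fine). Your reduction in $(1)\Rightarrow(2)$ to the case $I^{\sharp}=Z$ via Proposition~\ref{P:contZ}, including the verification that $Z=N$ forces $Z\subseteq I^{\sharp}$, is also correct and is exactly what the paper does.

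The gap is in the residual case $I^{\sharp}=Z$. Your filtered-colimit plan cannot work as written: over the chain ring $R/A$, a cyclic module $R/(0:b)$ is flat only when $(0:b)=A$ (any proper nonzero ideal of a local ring fails purity), so writing $I$ as a union of principal subideals gives a colimit of flat $R/A$-modules only if cofinally many $b\in I$ already satisfy $(0:b)=A$. That is not a formality; it is essentially the statement that $I$ is principal over $Q$. The paper secures this when $Z\neq Z^{2}$ by the observation you are missing: since $Z=N$, the maximal ideal $ZQ$ of $Q$ is nilpotent, and $Z\neq Z^{2}$ makes it principal, so $Q$ is Artinian and every ideal of $Q$---in particular $I$---is principal over $Q$. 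Then $I\cong Q/A$ is free over $Q/A$ and hence flat over $R/A$.

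When $Q$ is not coherent (and $Z=Z^{2}$), the argument is of a different nature and your sketch does not approach it. The paper makes a further case split: for $I=aZ$ one uses that $Z$ itself is flat (\cite[Theorem~10]{Couch03}) to deduce $aZ$ is flat over $R/(0:a)$; for $I$ neither principal over $Q$ nor of the form $aZ$, one shows $I$ is FP-injective over $R/A$ (Lemma~\ref{L:FP-inj}), observes that non-coherence of $Q$ forces every $(0:r)$ with $r\in I$ to be non-principal over $Q$, and then invokes the flatness criterion \cite[Theorem~15(4)(c)]{Cou16}. None of this is visible from ``the failure of some $(0:q)$ to be finitely generated supplies analogous structural input.''
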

\begin{proof}
By Proposition \ref{P:nZflat} $(2)\Rightarrow (1)$.

$(1)\Rightarrow (2)$. Let $I$ be an ideal and $A=(0:I)$. By Proposition \ref{P:contZ} it remains to examine the case where $I^{\sharp}=Z$. If $Z\ne Z^2$ then $Z$ is principal over $Q$. It follows that $Q$ is Artinian. Since $I$ is a principal ideal of $Q$ then $I$ is flat over $Q/A$ and $R/A$. Now suppose that $Z=Z^2$ and $Q$ is not coherent. By \cite[Theorem 10]{Couch03} $Z$ is flat, and we easily deduce that $aZ$ is flat over $R/(0:a)$ for each $a\in R$. Now suppose that $I$ is neither principal over $Q$ nor of the form $aZ$ for each $a\in R$. By Lemma \ref{L:FP-inj} $I$ is FP-injective over $R/A$. From $Q$ no coherent we deduce that $(0:r)$ is not principal over $Q$ for each $0\ne r\in I$. By \cite[Theorem 15(4)(c)]{Cou16} $I$ is flat over $R/A$.
\end{proof}

\begin{remark}\label{R:nQcoh}
\textnormal{If $R$ is a chain ring such that either $Z$ is principal over $Q$ or $Q$ is not coherent then each ideal $I$ satisfying $Z\subseteq I^{\sharp}$ is flat modulo its annihilator.}
\end{remark}

\begin{lemma}\label{L:inter}
Let $R$ be a chain ring and $M$ a finitely generated $R$-module. Then, for each proper ideal $A$ which is not of the form $rP$ for any $r\in P$, we have $AM=\cap_{s\in P\setminus A}sM$.
\end{lemma}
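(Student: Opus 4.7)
The inclusion $AM \subseteq \bigcap_{s \in P \setminus A} sM$ is immediate: for each $s \in P \setminus A$, chain comparability of ideals forces $A \subseteq sR$ (otherwise $sR \subseteq A$ and $s \in A$), so $AM \subseteq sM$.

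For the reverse inclusion, my plan is to reduce to the cyclic case. Over a chain ring, every finitely presented module is a finite direct sum of cyclic modules by Warfield's theorem (\cite[Theorem 1]{War70}, the same tool invoked in Lemma \ref{L:unis}). Granting the analogous decomposition for $M$, it suffices to treat $M = R/I$, since both $AM$ and $sM$ distribute over direct summands. For $M = R/I$ the claim unwinds to the ideal identity $A + I = \bigcap_{s \in P \setminus A}(sR + I)$.

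By chain comparability, either $A \subsetneq I$ or $I \subseteq A$. In the first subcase, pick $s \in I \setminus A$: since $I$ is proper we have $s \in P$, so $s \in P \setminus A$, and $sR \subseteq I$ gives $sR + I = I$, forcing the intersection (which always contains $I$) to equal $I = A + I$. In the second subcase, every $s \in P \setminus A$ satisfies $I \subseteq A \subsetneq sR$, whence $sR + I = sR$, and the assertion reduces to the key identity $A = \bigcap_{s \in P \setminus A} sR$. For this identity, given $x \notin A$: if $x$ is a non-unit, then $x \neq 0$ (as $0 \in A$) and $xR/xP \cong R/P$ is simple; were $xP \subseteq A$, then from $A \subsetneq xR$ (because $x \notin A$) and simplicity we would conclude $A = xP$, contradicting the hypothesis that $A$ is not of the form $rP$ with $r \in P$. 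Hence $xP \not\subseteq A$, and choosing $y \in P$ with $xy \notin A$ and setting $s = xy$ yields $s \in P \setminus A$ with $sR \subseteq xP \subsetneq xR$, so $x \notin sR$. If $x$ is a unit, any $s \in P \setminus A$ (nonempty under the implicit assumption $A \subsetneq P$) satisfies $sR \subsetneq R = xR$, so $x \notin sR$.

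The main obstacle is the passage from finitely generated $M$ to the cyclic case: Warfield's theorem as cited is stated for finitely presented modules, and a chain ring need not be coherent, so a finitely generated $M$ need not be finitely presented. To bridge this gap one could argue by induction on the minimal number of generators via an exact sequence $0 \to M' \to M \to R\bar m_1 \to 0$, applying the inductive hypothesis to the cyclic quotient $R\bar m_1$ and then to $M'$; the technical difficulty is that $M'$ need not be a direct summand, so one must control the interaction $sM \cap M'$ versus $sM'$ carefully. Once this reduction is secured, the case-analysis sketched above finishes the proof, the only nontrivial ingredient being the simplicity of $xR/xP$ together with the hypothesis that $A$ is not of the form $rP$.
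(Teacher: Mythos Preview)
You have correctly isolated the gap: Warfield's decomposition requires finite presentation, and a finitely generated module over a non-coherent chain ring need not be finitely presented. Your proposed remedy via a short exact sequence $0 \to M' \to M \to R\bar m_1 \to 0$ runs into exactly the obstruction you anticipate, namely controlling $sM \cap M'$ versus $sM'$; without additional structure on $M'$ this step does not go through.

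The paper closes the gap with a different structural input: by \cite[Theorem~15]{FuSa85}, every finitely generated module over a chain ring admits a finite chain of \emph{pure} submodules $0 = M_0 \subset \cdots \subset M_n = M$ with cyclic successive quotients. Purity of $M_{n-1}$ in $M$ is precisely the condition $sM \cap M_{n-1} = sM_{n-1}$ for all $s \in R$, and this is the missing ingredient. The induction then runs as follows: given $x \in \bigcap_{s \in P \setminus A} sM$, its image in the cyclic module $M/M_{n-1}$ lies in $\bigcap_s s(M/M_{n-1}) = A(M/M_{n-1})$ by the base case, so $x - ay \in M_{n-1}$ for some $a \in A$ and $y \in M$; since also $x - ay \in sM$ for each $s \in P \setminus A$, purity gives $x - ay \in \bigcap_s sM_{n-1} = AM_{n-1}$ by the inductive hypothesis. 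Thus the filtration, not a direct-sum splitting, is what carries the argument.

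Your treatment of the cyclic base case via the ideal identity $A + I = \bigcap_{s \in P \setminus A}(sR + I)$ and the simplicity of $xR/xP$ is correct and more explicit than the paper's, which simply cites \cite[Lemma~29]{Couch03} for this step.
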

\begin{proof}
By \cite[Theorem 15]{FuSa85} there is a finite sequence of pure submodules of $M$, \[0=M_0\subset M_1\subset\dots\subset M_{n-1}\subset M_n=M.\] such that $M_k/M_{k-1}$ is cyclic for each $k=1,\dots,n$. We proceed by induction on $n$. When $n=1$  $M$ is cyclic and we use \cite[Lemma 29]{Couch03} to conclude. Now suppose that $n>1$. Let $x\in\cap_{s\in P\setminus A}sM$. We may assume that $x\notin M_{n-1}$. Since $M/M_{n-1}$ is cyclic there exist $y\in M$ and $a\in A$ such that $(x-ay)\in M_{n-1}$. Moreover, by using the fact that $M_{n-1}$ is a pure submodule of $M$ we have that $(x-ay)\in\cap_{s\in P\setminus A}sM_{n-1}$. From the induction hypothesis we deduce that $x=ay+bz$ for some $z\in M_{n-1}$ and $b\in A$.  
\end{proof}

\begin{proposition}
\label{P:idemp} Let $R$ be a chain ring. Then, for each $a\in R$, $aZ$ is quasi-flat.
\end{proposition}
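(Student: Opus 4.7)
The plan is to apply Proposition~\ref{P:contZ} to $I=aZ$ and then convert the resulting flatness modulo the annihilator into $R$-quasi-flatness via a short change-of-rings argument. (Equivalently, one could first handle the case $a=1$ and then invoke Lemma~\ref{L:unis} to conclude for arbitrary $a$; both routes rely on the same input.)

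I first check the hypothesis of Proposition~\ref{P:contZ}. Since $aZ\subseteq Z\subseteq P$, the ideal $aZ$ is proper. Because $aZ$ is an ideal of $R$, we have $r\cdot aZ\subseteq aZ$ for every $r\in R$, so $(aZ)^{\sharp}=R$, and in particular $Z\subsetneq (aZ)^{\sharp}$. Proposition~\ref{P:contZ} then gives that $aZ$ is flat over $R/A$, where $A=(0:aZ)$, and applying Proposition~\ref{P:proj}(2) inside the ring $R/A$ upgrades this to: $aZ$ is quasi-flat as an $R/A$-module.

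To conclude that $aZ$ is quasi-flat as an $R$-module, I would take the defining data, an epimorphism $p\colon (aZ)^n\to V'$, a homomorphism $u\colon aZ\to V'$, and a homomorphism $v\colon G\to (aZ)^n$ with $G$ finitely presented over $R$, and observe that $(aZ)^n$, its quotient $V'$, and the image $v(G)$ are all annihilated by $A$. Hence $v$ factors through $G/AG\cong G\otimes_R R/A$, which is finitely presented over $R/A$ (tensor a finite presentation of $G$ by $R/A$). The $R/A$-quasi-flatness of $aZ$ then provides an $R/A$-linear, and thus $R$-linear, lift $G/AG\to (aZ)^n$; precomposing with $G\to G/AG$ yields the required $q\colon G\to (aZ)^n$ with $pq=uv$.

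The only nontrivial point is this final change-of-rings verification; once it is in place, the proposition is an immediate consequence of Propositions~\ref{P:contZ} and~\ref{P:proj}(2).
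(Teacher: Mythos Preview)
Your argument breaks down at the computation of $(aZ)^{\sharp}$. In this paper $I^{\sharp}=\{r\in R\mid rI\subset I\}$ is defined with \emph{strict} inclusion (equivalently, $I^{\sharp}/I$ is the inverse image of the zerodivisors of $R/I$), and it is always a proper prime ideal; so $(aZ)^{\sharp}=R$ is impossible. In fact $(aZ)^{\sharp}=Z$ for every $a$ with $aZ\neq 0$: if $r\notin Z$ then multiplication by $r$ is a bijection on $Z$ and on $aZ$, so $r\notin(aZ)^{\sharp}$; while every $z\in Z$ kills the nonzero class of $a$ in $R/aZ$, so $z\in(aZ)^{\sharp}$. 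Thus the hypothesis $Z\subset I^{\sharp}$ of Proposition~\ref{P:contZ} (which is used strictly in its proof, via an element $t\in I^{\sharp}\setminus Z$) fails for $I=aZ$, and that proposition does not apply.

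More seriously, the intermediate conclusion you aim for, that $aZ$ is flat over $R/(0:aZ)$, is \emph{false} in general. Corollary~\ref{C:exa} and Example~\ref{E:exa} exhibit chain rings $R$ with $P=Z$ (so $aZ=aP$) for which $aP$ is quasi-flat but \emph{not} flat modulo its annihilator. Hence no change-of-rings trick can reduce the proposition to Proposition~\ref{P:contZ}; the whole point of Proposition~\ref{P:idemp} is precisely to supply quasi-flatness in the case $I^{\sharp}=Z$ where flatness-mod-annihilator may fail. The paper's proof is accordingly more delicate: after reducing to $a=1$ via Lemma~\ref{L:unis} and to $Z=Z^{2}\neq 0$, it treats the case $Z=P$ by a direct lifting argument using Lemma~\ref{L:inter}, and the case $Z\neq P$ by extending the test map through $Q$ and correcting the lift using the $K_Z/K$ discrepancy.
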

\begin{proof} We may assume that $Z=Z^2\ne 0$. By Lemma \ref{L:unis} it is enough to study the case  $a=1$. First suppose that $Z=P$. We consider the following homomorphisms: $p:Z^n\rightarrow Z'$, $u:Z\rightarrow Z'$ and $v:G\rightarrow Z$ where $Z'$ is an $R$-module, $p$ is surjective, $n$ an integer $>0$ and $G=R/aR$ for some $a\in R$. If $r=v(1+aR)$ then $ar=0$. Let $s\in Z\setminus Rr$. Then $r=ss'$ for some $s'\in P$ and $u(r)=su(s')$. So, $u(r)\in\cap_{s\in Z\setminus Rr}sZ'$. Consider the following commutative pushout diagram:

\[\begin{matrix}
\ \ 0 & {} & \ \ \ 0 & {} & {} \\
\ \ \downarrow & {} & \ \ \ \downarrow & {} & {} \\
\ \ Z^n & \xrightarrow{p} & \ \ \ Z' & \rightarrow & 0 \\
{\tiny t}\downarrow & {} & {\tiny t'}\downarrow & {} & {} \\
\ \ R^n & \xrightarrow{p'} & \ \ \ R' & \rightarrow & 0
\end{matrix}\]
\\
where $t$ is the canonical inclusion. Clearly $R'$ is finitely generated. So, by Lemma \ref{L:inter} $u(r)=rx'$ for some $x'\in R'$. Let $x\in R^n$ such that $p'(x)=x'$. Let $q:G\rightarrow Z^n$ be the homomorphism defined by $q(1+aR)=rx$. Then $q:G\rightarrow Z^n$  is well defined because $ar=0$ and we have $pq=uv$. Hence $P$ is quasi-flat.

Now assume that $Z\ne P$ and $Z$ is faithful. Let $a\in P$ and $t\in Z\setminus (0:a)$. Let  $K=\ker(p)$ and $G_1=R/Rta$. Then $G\cong Rt/Rta\subseteq G_1$. Since $Q$ is FP-injective $v$ extends to $v_1:G_1\rightarrow Q$. But $v_1(1+Rta)\in Z$ because it is annihilated by $ta$. There exists a homomorphism $q':(G_1)_Z\rightarrow Z^n$ such that $p_Zq'=u_Z(v_1)_Z$. Let $q_1$ be the composition of the natural map $G_1\rightarrow (G_1)_Z$ with $q'$, $r=v_1(1+atR)$ and $x=q_1(1+atR)$. Then $u(r)-p(x)\in K_Z/K$. Let $q=q_1\vert_G$. We have $v(t+taR)=tr$ and $q(t+taR)=tx$. So, $u(tr)-p(tx)=t(u(r)-p(x))=0$. Hence $uv=pq$.
\end{proof}

\begin{proposition}
Let $R$ be a chain ring. Then each ideal $I$ satisfying $Z\subseteq I^{\sharp}$ is self-\underline{flat}.
\end{proposition}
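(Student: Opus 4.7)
The plan is to reduce to previously established results whenever possible. If $Z\subsetneq I^\sharp$, Proposition \ref{P:contZ} directly gives that $I$ is flat over $R/(0\!:\!I)$, so by Corollary \ref{C:flatan} $I$ is quasi-flat, and quasi-flat implies self-\underline{flat}. Moreover, if $Z\subseteq I$ then the first case of the proof of Proposition \ref{P:contZ} exhibits $I$ as a direct limit of free rank-one modules, hence flat and a fortiori self-\underline{flat}. Thus the only situation requiring a fresh argument is $I\subsetneq Z$ with $I^\sharp=Z$.

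In that case I would adapt the proof of Proposition \ref{P:idemp}. Given a surjection $p:I\rightarrow I'$, homomorphisms $u:I\rightarrow I'$ and $v:G\rightarrow I$ with $G=R/aR$ finitely presented, set $r=v(1+aR)$, so that $ar=0$. Since $I^\sharp=Z$, whenever $s\in Z$ satisfies $r\in sI$, one writes $r=ss'$ with $s'\in I$ and hence $u(r)=s\,u(s')\in sI'$; for $s\in P\setminus Z$ the equality $sI=I$ (because $s\notin I^\sharp$) gives $r\in sI$ automatically. Setting $J=\{s\in P:r\notin sI\}$, one gets a proper ideal contained in $Z$, and $u(r)\in\bigcap_{s\in P\setminus J}sI'$. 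Now form the pushout of $p$ along the inclusion $I\hookrightarrow R$:
\[\begin{matrix}
I & \xrightarrow{p} & I' & \rightarrow & 0 \\
\downarrow & {} & \downarrow & {} & {} \\
R & \xrightarrow{p'} & R' & \rightarrow & 0
\end{matrix}\]
As in Proposition \ref{P:idemp}, the right vertical map is injective, so $u(r)\in\bigcap_{s\in P\setminus J}sR'$, and after a standard reduction $R'$ is finitely generated. Applying Lemma \ref{L:inter}, provided $J$ is not of the form $tP$, yields $\bigcap_{s\in P\setminus J}sR'=JR'\subseteq rR'$, the last inclusion coming from the fact that $s\in J$ forces $sI\subsetneq Rr$ in the chain ring, placing $s$ in $Rr$. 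Writing $u(r)=rx'$ with $x'\in R'$, lifting $x'=p'(x)$ with $x\in R$, and defining $q(1+aR)=rx\in I$ produces a well-defined $q:G\rightarrow I$ with $pq=uv$, since $ar=0$ and the pushout commutes.

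The main obstacle will be making the inclusion $J\subseteq Rr$ fully rigorous in the chain-ring setting and handling the degenerate case $J=tP$ where Lemma \ref{L:inter} does not apply. When $Z\ne P$ the pushout $R'$ is not immediately controllable via Lemma \ref{L:inter} alone, and one will instead localize at $Z$ to exploit the FP-injectivity of $Q$ (Remark \ref{R:P=Z} and Lemma \ref{L:FP-inj}), reproducing the second half of the argument of Proposition \ref{P:idemp}. These wrinkles are already present in the proof of Proposition \ref{P:idemp}, so the task is essentially to push its scheme through with a general ideal $I$ in place of $Z$.
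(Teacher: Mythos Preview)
Your reductions to the case $I\subsetneq Z$ with $I^{\sharp}=Z$ are sound, but the core argument has a real gap, not merely a wrinkle. The step ``$s\in J$ forces $sI\subsetneq Rr$, placing $s$ in $Rr$'' is the problem: from $sI\subsetneq Rr$ one cannot conclude $s\in Rr$. In fact the reverse inclusion $Rr\subseteq J$ always holds (if $s=cr$ then $r\in sI=crI$ would force $r(1-ci)=0$ with $ci\in P$, hence $r=0$), so your Lemma~\ref{L:inter} computation yields only $u(r)\in JR'\supseteq rR'$, which is useless. The reason the analogous step succeeds in Proposition~\ref{P:idemp} is that there $I=Z$: for $s\in Z\setminus Rr$ one writes $r=ss'$ with $s'\in P=Z=I$, so $r\in sI$ automatically and $J=Rr$ on the nose. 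For a proper $I\subsetneq Z$ the quotient $s'=r/s$ may well lie in $Z\setminus I$, and then (when $(0{:}s)\subseteq I$) one genuinely has $s\in J\setminus Rr$.

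The paper does not try to push the Proposition~\ref{P:idemp} scheme through. After the same initial reduction to $I^{\sharp}=Z$, it further reduces via Remark~\ref{R:nQcoh} to the case $0\ne Z=Z^2$ with $Q$ coherent, and excludes $I$ principal over $Q$ or of the form $aZ$. For the remaining ideals it invokes two structural facts: $I$ is FP-injective over $R/(0{:}I)$ (Lemma~\ref{L:FP-inj}) and $Z\otimes_R I$ is flat over $R/(0{:}I)$ (from \cite[Theorem~15(4)(c)]{Cou16}, using that each $(0{:}r)$ is principal over $Q$ by coherence). The flatness of $Z\otimes_R I$ is what replaces your missing inclusion: one lifts $r$ to $s\otimes b\in Z\otimes_R I$, observes that $a(s\otimes b)$ lies in $\ker(Z\otimes_R I\to I)$ and hence is killed by every $t\in Z$, and then uses flatness to solve the lifting problem for $R/taR$ rather than $R/aR$; a final uniserial/coherence argument shows the solution is independent of $t$ and is killed by $a$ itself. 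So the missing idea is precisely the passage through $Z\otimes_R I$ and the appeal to \cite{Cou16}; the Lemma~\ref{L:inter} mechanism is specific to $I=Z$.
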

\begin{proof}
By Remark \ref{R:nQcoh} we may assume that $0\ne Z=Z^2$ and $Q$ is coherent, and by Proposition \ref{P:contZ} that $I^{\sharp}=Z$. We may suppose that $I$ is neither principal over $Q$ nor of the form $aZ$ for any $a\in R$. We consider the following homomorphisms: $p:I\rightarrow I'$, $u:I\rightarrow I'$ and $v:G\rightarrow I$ where $I'$ is an $R$-module, $p$ is surjective and $G=R/aR$ for some $a\in P$. By Lemma \ref{L:FP-inj} $I$ is FP-injective over $R/A$, where $A=(0:I)$ and by \cite[Theorem 15(4)(c)]{Cou16} $Z\otimes_RI$ is flat over $R/A$ because $(0:r)$ is principal over $Q$ for each $r\in I$. Since $I=ZI$ the canonical homomorphism $\phi:Z\otimes_RI\rightarrow I$ is surjective. Let $r=v(1+aR)$. Then $r=\phi(s\otimes b)$ where $s\in Z$ and $b\in I$. Since $ar=0$ then $a(s\otimes b)\in\ker(\phi)\cong\mathrm{Tor}^Q_1(Q/Z,I)$. So, $aZ\subseteq (0:s\otimes b)$. Let $v':R/taR\rightarrow Z\otimes_RI$ be the homomorphism by $v'(1+taR)=s\otimes b$ where $t\in Z$. From the flatness of $Z\otimes_R I$ we deduce there exists $q_t':R/taR\rightarrow I$ such that $pq_t'=u\phi v'$. Let $x_t=q_t'(1+taR)$. Then $tax_t=0$. Let $t'$ another element of $Z$. Thus $p(x_t)=p(x_{t'})=u(r)$, whence $(x_{t'}-x_t)\in\ker(p)\subset Qx_t$ since $I$ is a uniserial $Q$-module. So, $Qx_t=Qx_{t'}$ and we can choose $x_t=x_{t'}=x$. Hence $aZ\subseteq (0:x)$. But $(0:x)$ is a principal ideal of $Q$, whence $ax=0$. If we put $q(c+aR)=cx$ for each $c\in R$ then $pq=uv$.
\end{proof}

\begin{theorem}\label{T:fond}
Let $R$ be a chain ring. Then each ideal $I$ is self-\underline{flat} if and only if $Z=N$.
\end{theorem}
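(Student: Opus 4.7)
My plan is to combine the two propositions immediately preceding the theorem. The unnamed proposition just above shows that $Z \subseteq I^{\sharp}$ forces $I$ to be self-\underline{flat}, while Proposition~\ref{P:nZflat} shows that $I^{\sharp} \subsetneq Z$ forces $I$ not to be self-\underline{flat}. Because $I^{\sharp}$ and $Z$ are always comparable primes in the chain ring, every ideal is self-\underline{flat} if and only if no proper ideal $I$ satisfies $I^{\sharp} \subsetneq Z$, and the theorem reduces to the equivalence between this last condition and $Z=N$.

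For the direction $Z=N \Rightarrow$ all ideals self-\underline{flat}: since $Z$ is prime and equal to $N$, the nilradical $N$ is itself prime and hence is the unique minimal prime of $R$ (the nilradical being the intersection of all primes). For any nonzero proper ideal $I$ the top prime $I^{\sharp}$ then satisfies $I^{\sharp} \supseteq N = Z$, so the preceding proposition gives that $I$ is self-\underline{flat}; the zero ideal and $R$ itself are trivially self-\underline{flat}.

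For the converse, assume $Z \neq N$, choose a non-nilpotent zerodivisor $a \in Z \setminus N$, and set
\[
J \;=\; \bigcup_{n \ge 1} (0 : a^n).
\]
Non-nilpotence of $a$ gives $1 \notin J$, while $a \in Z$ gives $(0:a) \neq 0$, so $J$ is a nonzero proper ideal. I then check that $a \notin J^{\sharp}$: if $s \notin J$ but $as \in J$, then $a^{n+1}s = 0$ for some $n$, which places $s \in (0:a^{n+1}) \subseteq J$, a contradiction. So $a \in Z \setminus J^{\sharp}$, and comparability of the primes $J^{\sharp}$ and $Z$ forces $J^{\sharp} \subsetneq Z$. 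Proposition~\ref{P:nZflat} then says $J$ is not self-\underline{flat}, contradicting the hypothesis.

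The only mild subtlety is the observation that $Z$ prime together with $Z=N$ makes $N$ a minimal prime of $R$; the rest is a short chase through the definition of $I^{\sharp}$ and the construction of $J$ as an ascending union of annihilators, so I do not anticipate a genuine obstacle.
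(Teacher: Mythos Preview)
Your argument is correct and follows exactly the strategy the paper has set up: the unnamed proposition just before the theorem covers every nonzero proper ideal with $Z\subseteq I^{\sharp}$, and Proposition~\ref{P:nZflat} eliminates any ideal with $I^{\sharp}\subsetneq Z$, so the whole question reduces to whether such an ideal exists. For the converse you build $J=\bigcup_{n}(0:a^{n})$ and verify $a\notin J^{\sharp}$; this works, but there is a shorter choice: take $I=N$ itself. In a chain ring with $Z\neq N$ the nilradical is a nonzero prime (if $N=0$ then $0$ is prime and $R$ is a domain, forcing $Z=0=N$), and for any nonzero prime $L$ one has $L^{\sharp}=L$, so $N^{\sharp}=N\subsetneq Z$ and Proposition~\ref{P:nZflat} applies immediately.
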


The following can be proven by  using \cite[Lemmas 3.8, 3.12 and 4.5]{AbJaKa11}.
\begin{theorem}
\label{T:localfqp} \cite[Theorem 4.1]{Cou15}. Let $R$ a local ring and $N$ its nilradical. Then $R$ is a fqp-ring if and only if either $R$ is a chain ring or $R/N$ is a valuation domain and $N$ is a divisible torsionfree $R/N$-module.
\end{theorem}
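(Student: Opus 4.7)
Since the statement is cited from \cite{Cou15} with the hint that \cite[Lemmas 3.8, 3.12 and 4.5]{AbJaKa11} suffice, the plan is to reconstruct the proof of each direction of the equivalence using those auxiliary results.

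For the sufficiency, the chain ring case is nearly immediate: every finitely generated ideal of a chain ring is principal, and over a commutative ring every cyclic module $R/I$ is quasi-projective (given surjective $p:R/I\to M$ and any $u:R/I\to M$, lift $u(1+I)$ along $p$ to $r+I$ and define $t(1+I)=r+I$; this is well-defined and $pt=u$). In the second alternative, where $R/N$ is a valuation domain and $N$ is divisible torsionfree as an $R/N$-module, I would use \cite[Lemma 3.8]{AbJaKa11} to reduce the quasi-projectivity test to two-generated ideals $I=Ra+Rb$. If both generators lie outside $N$, the valuation property on $R/N$ together with divisibility of $N$ promotes a relation $\bar b = \bar c\bar a$ in $R/N$ to an equality $b=ca$ in $R$, making $I$ principal; if one generator lies in $N$, torsion-freeness controls the annihilators while divisibility provides the lifts required to verify quasi-projectivity directly.

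For the necessity, assume $R$ is local fqp and not a chain ring. The implication fqp $\Rightarrow$ Gaussian (from \cite{AbJaKa11}) together with the Bazzoni--Glaz theorem that a local Gaussian ring has $R/N$ a valuation domain yields the first structural conclusion. Failure of chain-ness produces $a,b\in R$ incomparable under divisibility in $R$, whereas their images are comparable in $R/N$; say $b-ca\in N$. Applying quasi-projectivity of $Ra+Rb$ through \cite[Lemmas 3.12 and 4.5]{AbJaKa11} forces $b-ca\in aN$; as $a$ and $b-ca$ are essentially arbitrary in $R\setminus N$ and $N$ respectively, this is precisely the divisibility of $N$ as an $R/N$-module. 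A parallel analysis of ideals $Ra+Rx$ with $x\in N$ and $rx=0$ for some non-nilpotent $r$ yields torsion-freeness.

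The main obstacle is the necessity direction, specifically the step that converts abstract quasi-projectivity of carefully chosen 2-generated ideals into the structural divisibility and torsion-freeness of $N$: this translation is essentially what \cite[Lemmas 3.12 and 4.5]{AbJaKa11} encode, so once they are in hand the rest reduces to bookkeeping. One subtle point worth flagging is that the two alternatives in the theorem are genuinely distinct and not merged: a chain ring with $N\subsetneq Z$ has $N$ neither torsionfree nor divisible over $R/N$, yet remains fqp because its finitely generated ideals are already cyclic.
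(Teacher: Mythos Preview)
The paper does not supply a proof of this theorem at all: it merely records the statement, cites \cite[Theorem~4.1]{Cou15}, and remarks that it can be obtained from \cite[Lemmas~3.8, 3.12 and 4.5]{AbJaKa11}. So there is no detailed argument to compare against, and your sketch should be judged on its own.

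Your outline has the right overall shape, but there are two concrete gaps. First, in the sufficiency direction for the non-chain case you split on whether the two generators lie outside $N$ or one lies in $N$, but you omit the case where \emph{both} generators lie in $N$; moreover, you invoke Lemma~3.8 of \cite{AbJaKa11} to ``reduce the quasi-projectivity test to two-generated ideals,'' yet the paper's own use of that lemma (in the proof of Proposition~4.3) shows it says something different: in a local fqp-ring whose nilradical is not uniserial one can find $a,b\in N$ with $aR\cap bR=0$. That is a structural statement feeding the \emph{necessity} direction, not a reduction device for sufficiency, so your attribution is likely off and the reduction to two generators is currently unjustified.

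Second, in the necessity direction your passage from ``$b-ca\in aN$ for this particular $a,b$'' to ``$N$ is divisible'' rests on the claim that $a\in R\setminus N$ and $b-ca\in N$ are ``essentially arbitrary.'' That needs an argument: given arbitrary $a\notin N$ and $n\in N$, you must exhibit a two-generated ideal to which the quasi-projectivity hypothesis (via Lemmas~3.12 and~4.5) actually applies and returns $n\in aN$. The analogous step for torsion-freeness is similarly only gestured at. These are exactly the places where the cited lemmas do the real work, so the sketch as written has not yet closed the loop.
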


\begin{corollary}
\label{C:qmain} Let $R$ be a local fqf-ring which is not a chain ring and $N$ its nilradical. Then each ideal of $R$ is flat modulo its annihilator.
\end{corollary}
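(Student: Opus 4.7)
The plan is to apply Theorem~\ref{T:localfqp}, using the fact (noted in the introduction) that every local fqf-ring is a fqp-ring: since $R$ is not a chain ring, $R/N$ must be a valuation domain and $N$ must be a divisible torsionfree $R/N$-module. This second condition forces $N^2=0$, since otherwise the $R$-module action on $N$ would not factor through $R/N$; thus $N$ becomes a vector space over the fraction field $K$ of $R/N$.

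I would then extract the key consequence that the set of zero-divisors of $R$ coincides with $N$. Elements of $N$ are nilpotent, hence zero-divisors. Conversely, suppose $r\in R\setminus N$ and $rs=0$. Because $R/N$ is a domain and $\bar r\ne 0$, we get $s\in N$; then torsionfreeness of $N$ over $R/N$ applied to $\bar r\cdot s=0$ forces $s=0$. Hence every element outside $N$ is regular in $R$.

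Now let $I$ be an ideal with annihilator $A=(0:I)$; I would split into two cases and in each approximate $I$ by a directed union of finitely generated subideals, invoking the fqf hypothesis together with Corollary~\ref{C:flatan} to obtain flatness of each approximant over the appropriate quotient. If $I\subseteq N$, then $N\cdot I\subseteq N^2=0$ gives $N\subseteq A$, and regularity outside $N$ gives $A=N$ whenever $I\ne 0$. Each finitely generated subideal $I_\alpha\subseteq I$ lies in $N$ with $(0:I_\alpha)=N$, hence is quasi-flat and therefore flat over $R/N$ by Corollary~\ref{C:flatan}; so $I$, being a direct limit of such $I_\alpha$, is flat over $R/N=R/A$. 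If $I\not\subseteq N$, choose $a\in I\setminus N$, which is regular; then $A\subseteq(0:a)=0$. Approximating $I$ by the directed family of finitely generated subideals each containing $a$, each such subideal has trivial annihilator and is flat over $R=R/A$ by Corollary~\ref{C:flatan}, so the direct limit $I$ is also flat over $R/A$.

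The main subtlety is the recognition that ``$N$ is an $R/N$-module'' in the conclusion of Theorem~\ref{T:localfqp} entails $N^2=0$: this is what makes the two-case dichotomy clean, because everything outside $N$ is then automatically regular and $(0:I)$ takes only the two possible values $0$ or $N$. Once this is in hand, matching the annihilator of each finitely generated approximation $I_\alpha$ with the annihilator of $I$ is immediate in both cases, and the direct-limit passage from finitely generated ideals (where the fqf hypothesis applies) to arbitrary ideals is routine.
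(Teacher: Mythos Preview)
Your proof is correct and follows the same overall structure as the paper: invoke Theorem~\ref{T:localfqp}, extract $N^2=0$ and the regularity of elements outside $N$, and split into the cases $I\subseteq N$ and $I\nsubseteq N$. The difference lies only in how flatness is established in each case. You pass through the fqf hypothesis and Corollary~\ref{C:flatan}, writing $I$ as a direct limit of finitely generated subideals whose annihilators agree with that of $I$. The paper instead argues directly from the structure given by Theorem~\ref{T:localfqp}: when $I\subseteq N$, $I$ is a submodule of the torsionfree $R/N$-module $N$, hence torsionfree and therefore flat over the valuation domain $R/N$; when $I\nsubseteq N$, divisibility of $N$ over $R/N$ forces every finitely generated subideal meeting $R\setminus N$ to be principal, generated by a regular element, hence free. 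The paper's route is a bit more elementary (it does not need the fqf hypothesis beyond what is required to apply Theorem~\ref{T:localfqp}, nor Corollary~\ref{C:flatan}), while your route has the virtue of illustrating how Corollary~\ref{C:flatan} can be used uniformly.
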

\begin{proof}
Let $I$ be an ideal. If $I\subseteq N$ then $I$ is a torsionfree module over the valuation domain $R/N$. Hence it is a flat $R/N$-module. If $I\nsubseteq N$ then each finitely subideal of $I$ is principal and free. So, $I$ is flat.
\end{proof}

The following corollary and example allow us to see there exist quasi-flat modules which are not flat modulo their annihilator.

\begin{corollary}\label{C:exa}
Let $R$ a chain ring. Assume that $P$ is not principal and $R$ is an IF-ring. Then, for each $a\in R$, $aP$ is quasi-flat but it is not flat over $R/(0:aP)$.
\end{corollary}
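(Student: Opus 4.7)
An IF-ring is in particular self-FP-injective, and for a chain ring this forces $P = Z$ by Remark~\ref{R:P=Z}. Hence $aP = aZ$, and Proposition~\ref{P:idemp} yields directly that $aP$ is quasi-flat.

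\textbf{Non-flatness over $R/(0:aP)$.} Set $A = (0:aP)$ and $S = R/A$, and suppose for contradiction that $aP$ is $S$-flat. The structural features of the setup are: $S$ is a local chain ring with maximal ideal $P_S = P/A$; the $S$-module $aP$ is faithful by construction; since $P$ is non-principal over a chain ring one has $P = P^2$, so $aP = aP^2 = P_S \cdot aP$; and coherence of $R$ (which follows from IF) forces every cyclic annihilator in $R$ to be principal.

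The natural subcase is $A = (0:a)$, which is equivalent to $aR \cap (0:P) = 0$. In this subcase multiplication by $a$ induces an $R$-module isomorphism $S \cong aR$ identifying $aP$ with the maximal ideal $P_S$ of $S$, reducing the claim to the non-flatness of $P_S$ over $S$. For this I would pick $r \in P_S$ with $(0:_S r) \ne 0$; such $r$ exists because the chain property together with $P = Z$ forces every nonzero element of $P_S$ to be a zero-divisor in $S$ (given $\pi \in P \setminus (0:a)$, one has $(0:\pi) \subseteq aR$, hence $(0:\pi) \cap aR = (0:\pi) \ne 0$, which provides a nonzero annihilator for the image of $\pi$). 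By coherence, $(0:_S r) = cS$ is principal with $0 \ne c \in P_S$, and $(0:_{P_S} r) = (0:_S r) \cap P_S = cS$. Flatness of $P_S$ would require $(0:_{P_S} r) = (0:_S r) \cdot P_S$, i.e.\ $cS = cP_S$, which forces $c = cp$ for some $p \in P_S$; then $c(1-p) = 0$, and since $1-p$ is a unit, $c = 0$, a contradiction.

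The main obstacle is the remaining subcase $A \supsetneq (0:a)$, where $aR \cap (0:P) \ne 0$ and $aP$ is no longer immediately identifiable with an ideal of $S$. My plan is to reduce it to the preceding subcase by passing to the further quotient $R/(0:a)$, in which the image of $a$ becomes a non-zero-divisor; the delicate point is verifying that enough of the IF / chain-ring / non-principal-maximal-ideal structure is inherited by this quotient for the Tor computation above to rerun verbatim.
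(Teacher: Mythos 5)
Your first half is correct and uses exactly the paper's toolkit: $R$ IF implies $R$ is self FP-injective, hence $P=Z$ by Remark~\ref{R:P=Z}, and then $aP=aZ$ is quasi-flat by Proposition~\ref{P:idemp}. The genuine gap is in the second half: you leave the subcase $(0:aP)\supsetneq(0:a)$ as a ``plan'' rather than a proof. In fact that subcase never occurs, and noticing this is precisely how the paper's proof begins. Since $R$ is IF it is coherent, so $(0:c)$ is principal for every $c\in R$; if $P$ were not faithful, any nonzero $c\in(0:P)$ would satisfy $P\subseteq(0:c)\subsetneq R$, forcing $(0:c)=P$ and contradicting the non-principality of $P$. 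Hence $(0:P)=0$, so $aR\cap(0:P)=0$ and $(0:aP)=(0:a)$ for every $a\neq 0$; your ``main obstacle'' is vacuous (for $a=0$ the statement is degenerate, and for $a$ a unit your computation applies with $A=0$ and gives the non-flatness of $P$ itself). Two smaller points to tighten in the subcase you did treat: the principality of $(0:_S r)$ should be obtained as the image of the principal $R$-ideal $(0:_R a\pi)$, which contains $A=(0:a)$ — you appeal to ``coherence'' but coherence of $S$ is not available at that stage — and $r\neq 0$ in $S$ because $\pi\notin(0:a)$.

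Once repaired this way, your argument for non-flatness is genuinely different from, and more self-contained than, the paper's. The paper identifies $aP\cong P/Rb$ with $Rb=(0:a)=(0:aP)$, invokes \cite[Theorem 11]{Couch03} to see that $R/Rb$ is again an IF chain ring with non-principal maximal ideal, and then quotes \cite[Theorem 10]{Couch03} for the non-flatness of its maximal ideal. You instead prove directly that $P_S$ is not flat over $S$ by the elementary computation $\{m\in P_S\mid rm=0\}=(0:_S r)\neq(0:_S r)P_S$, i.e.\ a $\mathrm{Tor}_1$ vanishing criterion plus a Nakayama-type step using that $(0:_S r)$ is principal; this avoids the two external citations at the cost of re-deriving what \cite[Theorem 10]{Couch03} packages.
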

\begin{proof}
Since $R$ is coherent and $P$ is not finitely generated we get that $P$ is faithful. By \cite[Theorem 10]{Couch03} $P$ is not flat. Let $0\ne a\in P$. There exists $b\in P$ such that $(0:a)=Rb$. So, $aP\cong P/Rb$, and $Rb=(0:aP)$ because $P$ is faithful. By \cite[Theorem 11]{Couch03} $R/Rb$ is an IF-ring and consequently $R/Rb$ satisfies the same conditions as $R$. Hence $aP$ is quasi-flat but not flat over $R/Rb$. 
\end{proof}

\begin{example}\label{E:exa}
Let $R=D/dD$, where $D$ is a valuation domain with a non-principal maximal ideal and $d$ a non-zero element of $D$ which is not invertible. Then $R$ satisfies the assumptions of Corollary \ref{C:exa}.
\end{example}

\section{Quasi-projective ideals over local fqp-rings}\label{S:qp}

An $R$-module $M$ is said to be {\bf linearly compact} if every finitely solvable set of congruences $x \equiv x_{\alpha}$ (mod $M_{\alpha}$) ($\alpha \in \Lambda$,  $x_{\alpha}\in M$ and  $M_{\alpha}$ is a submodule of $M$ for each $\alpha \in \Lambda$) has a simultaneous solution in $M$. A  chain ring $R$ is \textbf{maximal} if it is linearly compact over itself and $R$ is \textbf{almost maximal} if $R/A$ is maximal for each non-zero ideal $A$.

\begin{theorem}\label{T:max}
Let $R$ be a chain ring. The following conditions are equivalent:
\begin{enumerate}
\item $R$ is almost maximal and  $Z=N$;
\item each ideal is quasi-projective.
\end{enumerate}
\end{theorem}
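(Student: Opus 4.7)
The implication $(2) \Rightarrow Z = N$ is immediate: by Proposition \ref{P:proj}(1), each quasi-projective $R$-module is quasi-flat, hence self-\underline{flat}, so Theorem \ref{T:fond} forces $Z = N$.

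For the almost-maximality half of $(2) \Rightarrow (1)$, I would first descend quasi-projectivity from $R$ to the valuation domain $R/N$. Given an ideal $J$ of $R$ with $J \supseteq N$, a submodule $X/N \subseteq J/N$, and an $(R/N)$-linear map $f \colon J/N \to J/X$, compose with the projection $J \twoheadrightarrow J/N$ to obtain an $R$-linear map $\tilde f \colon J \to J/X$. A quasi-projective lift $\tilde g \colon J \to J$ automatically sends $N$ into $NJ \subseteq N$, so it descends to an endomorphism $\bar g \colon J/N \to J/N$ lifting $f$. Since every ideal of $R/N$ has the form $J/N$ with $J \supseteq N$ in $R$, every ideal of $R/N$ is quasi-projective, and Hermann's theorem \cite{Her84} gives $R/N$ almost maximal. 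I would then bootstrap almost maximality from $R/N$ back to $R$ via the characterization that $R$ is almost maximal iff $R/rR$ is linearly compact for every nonzero $r$: when $r \notin N$, the chain structure together with $Z = N$ forces $N \subseteq rR$, so $R/rR \cong (R/N)/(rR/N)$ is linearly compact by the previous step; the case $r \in N$ reduces via the fact that $r$ is nilpotent, layered with quotients of $R/N$.

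For $(1) \Rightarrow (2)$, let $I$ be an ideal of $R$. Principal ideals are cyclic, and every cyclic module $R/A$ is quasi-projective over any commutative ring: an $R$-map $R/A \to R/B$ sending $1 \mapsto b$ lifts to the endomorphism $1 \mapsto b$ of $R/A$, well defined because $bA \subseteq A$. If $I$ is non-principal, the totality of the chain together with $Z = N$ forces either $I \supseteq N$ or $I \subseteq N$. In the first case $I/N$ is a non-principal ideal of the almost maximal valuation domain $R/N$, so Hermann's theorem produces quasi-projectivity of $I/N$ over $R/N$; I would lift this to $I$ over $R$ using Lemma \ref{L:unis} to absorb the extra nilpotent part. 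In the second case $I$ sits inside the nilradical, and I would apply a similar argument to the ring $R/(0:I)$, which inherits both almost maximality and the condition $Z = N$.

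The technical heart of the proof lies in transferring almost maximality and quasi-projectivity across the nilpotent extension $R \to R/N$. When $N$ is itself nilpotent an induction on its nilpotency index would suffice, but in general $N$ is only nil and that induction is unavailable; one must handle the passage via the chain structure and Lemma \ref{L:unis} to reduce to successive cyclic multiplications. This is where I expect the argument to demand the most care.
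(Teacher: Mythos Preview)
Your outline has genuine gaps in both directions.

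For $(2)\Rightarrow(1)$, the descent to $R/N$ is fine and \cite[Theorem 3.3]{Her84} does give $R/N$ almost maximal, but the bootstrap back to $R$ breaks for $0\ne r\in N$. The phrase ``layered with quotients of $R/N$'' hides the difficulty: a filtration of $R/rR$ with $R/N$-module subquotients need not exist (when $N=N^2$ the obvious filtration by powers of $N$ does not terminate), and even when one does exist the bottom layer is typically $R/N$ itself, whose linear compactness would require $R/N$ to be \emph{maximal}, not merely almost maximal. The paper closes this gap by a different decomposition: it proves separately, via the substantial Proposition \ref{P:ZP}, that $Q=R_Z$ is almost maximal, and then shows $R/Z$ is \emph{maximal} by exhibiting $Q/Z$ (the quotient field of $R/Z$) as a quasi-projective $R/Z$-module so that \cite[Theorem 3.4]{Her84} applies; the two pieces are glued by \cite[Theorem 22]{Couc06}. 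Your route bypasses both Proposition \ref{P:ZP} and the upgrade from almost maximal to maximal, and I do not see how to repair it without them.

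For $(1)\Rightarrow(2)$, Lemma \ref{L:unis} only passes quasi-projectivity from $U$ down to $aU$, not from a quotient $I/N$ back up to $I$, so it cannot ``absorb the nilpotent part'' as you suggest. Concretely, if $p:I\to I/X$ with $X\subseteq N\subset I$, the target is not an $R/N$-module and the $R/N$-theory is silent; likewise, for $I\subseteq N$ the ring $R/(0:I)$ is again a chain ring with nonzero nilradical, not a valuation domain, so Hermann does not apply and you are back where you started. The paper's argument is direct and does not pass through $R/N$: self-\underline{flat}ness of $I$ (Theorem \ref{T:fond}) produces, for each $r\in I$, an element $x_r\in Q$ with $u(r)=p(rx_r)$, and the images $x'_r$ in $Q/\ker p$ form a finitely solvable family of cosets indexed by $r\in I$; almost maximality then furnishes a common solution $x'$, and multiplication by any preimage $x\in Q$ is the desired endomorphism of $I$. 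This linear-compactness step is the heart of the implication.
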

\begin{proof}
$(1)\Rightarrow (2)$.
Let $I$ be a non-zero proper ideal of $R$, $p:I\rightarrow I'$ an epimorphism, $K=\ker(p)$ and $u:I\rightarrow I'$ a homomorphism. First suppose that $I\subseteq Z$. By Theorem \ref{T:fond} $I$ is self-\underline{flat}. So, for each $r\in I$ and $b\in (0:r)$ there exists $y_{r,b}\in I$ such that $by_{r,b}=0$ and $u(r)=p(y_{r,b})$. Even if $K\ne 0$ we can take $y_{r,b}=y_{r,c}=y_r$ if $c$ is another element of $(0:r)$. So, $(0:r)\subseteq (0:y_r)$. Since $Q$ is FP-injective then $y_r=rx_r$ where $x_r\in Q$. We put $R'=Q/K$, $p':Q\rightarrow R'$ the canonical epimorphism and $x_r'=p'(x_r)$ for each $r\in I$. So, for each $r\in I$, $u(r)=rx'_r$. If $s\in I\setminus Rr$ then we easily check that $(x'_s-x'_r)\in R'[r]=\{y\in R'\mid ry=0\}$. If $R$ is almost maximal then the family of cosets $(x'_r+R'[r])_{r\in I}$ has a non-empty intersection. Let $x'$ be an element of this intersection. Then $u(r)=rx'$ for each $r\in I$.  Let $x\in Q$ such that $p'(x)=x'$. For each $r\in I$, $rx\in rx_r+K\subseteq I$. If $q$ is the multiplication by $x$ in $I$ then $pq=u$. Hence $I$ is quasi-projective. Now suppose that $Z\subset I$. Then for each $r\in I\setminus Z$ there exists $y_r\in I$ such that $u(r)=p(y_r)$. But $y_r=r(r^{-1}y_r)=rx_r$ where $x_r\in Q$. We do as above to show that $I$ is quasi-projective.
\end{proof}

\begin{proposition}\label{P:ZP}
Let $R$ a chain ring. Assume that $P=N$. Then $R$ is almost maximal if each ideal $I$ is quasi-projective.
\end{proposition}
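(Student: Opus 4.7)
The plan is to verify that $R/A$ is linearly compact for every nonzero proper ideal $A$. Fix such $A$ and a finitely solvable family $(x_\alpha+B_\alpha)_{\alpha\in\Lambda}$ of cosets of $R/A$ with $A\subseteq B_\alpha$; the goal is to produce $x\in R$ with $x-x_\alpha\in B_\alpha$ for every $\alpha$. Straightforward reductions let us assume $A\subsetneq B_\alpha\subsetneq R$ and $\bigcap_\alpha B_\alpha\subsetneq B_\alpha$ for every $\alpha$.

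Set $I=\bigcup_\alpha(A:_RB_\alpha)$. Because each $B_\alpha$ is proper, $I\subseteq P$; because $aB_\alpha\subseteq A$ whenever $a\in A$, we have $A\subseteq I$. Define $u\colon I\to I/A$ by $u(r)=rx_{\alpha(r)}+A$, where $\alpha(r)$ is any index with $r\in(A:B_{\alpha(r)})$. Finite solvability shows that $u$ is a well-defined $R$-linear map, by the same computation already used in the proof of Theorem~\ref{T:max}: if $\sum s_ir_i=0$ with $r_i\in(A:B_{\alpha_i})$, pick $y$ with $y-x_{\alpha_i}\in B_{\alpha_i}$ for every $i$ and deduce $\sum s_ir_ix_{\alpha_i}\in A$. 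Applying the quasi-projectivity of $I$ to the canonical projection $p\colon I\to I/A$ produces $q\colon I\to I$ with $pq=u$.

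The decisive step is to extract $x\in R$ with $q(r)\equiv rx\pmod A$ for every $r\in I$. Granted such $x$, for each $\alpha$ and each $r\in(A:B_\alpha)$ a finite-solvability manipulation gives $r(x-x_\alpha)\in A$, hence $x-x_\alpha\in(A:_Rr)$. Intersecting over all $r\in(A:B_\alpha)$ delivers $x-x_\alpha\in(A:(A:B_\alpha))$. By Remark~\ref{R:P=Z}, which applies because the hypothesis $P=N$ forces $A^\sharp=P$ and hence $R/A$ is self FP-injective, this double annihilator equals $B_\alpha$ unless $B_\alpha$ is of the exceptional form $Pt+A$; such exceptional indices can be handled by adjoining to the system a non-exceptional auxiliary coset pinning the same value.

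The main obstacle is producing the element $x$. Self FP-injectivity of $R$ (again from $P=N=Z$) lets us write $q(r)=rx_r$ for each $r\in I$ with the $x_r$ compatible modulo $(0:_Rr)$, but gluing the $x_r$ into a single $x$ is a priori a new instance of linear compactness, exactly what is being proved. The circularity must be resolved by exploiting that $q$ is not an arbitrary endomorphism of $I$ but the lift of the specific map $u$ built from a finitely solvable system; equivalently one must extend the $R$-linear map $\phi\colon I\to R/A$, $r\mapsto q(r)+A$, to a hom $R\to R/A$ through the FP-injective structure of $R/A$, and the required $x$ is the value of such an extension at~$1$.
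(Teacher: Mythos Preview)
Your proposal has a genuine gap at exactly the spot you flag as ``the main obstacle.'' You need to extend the $R$-linear map $\phi\colon I\to R/A$ to all of $R$, and you propose to do this ``through the FP-injective structure of $R/A$.'' But FP-injectivity of $R/A$ only guarantees extensions of maps defined on \emph{finitely generated} ideals (equivalently, vanishing of $\mathrm{Ext}^1$ against finitely presented modules). Your ideal $I=\bigcup_\alpha(A:B_\alpha)$ is typically not finitely generated, and extending $\phi$ from such an $I$ is precisely full injectivity of $R/A$, i.e.\ maximality of $R/A$---which is what you are trying to prove. Appealing to the fact that $u$ ``comes from a finitely solvable system'' does not help: finite solvability only tells you that $\phi$ is well defined on each finitely generated subideal, not that the individual extensions so obtained can be glued.

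The paper breaks this circularity by a use of the hypothesis $P=N$ that goes beyond self FP-injectivity. Given the intersection $I=\bigcap_\lambda I_\lambda$, it passes to the ideal $A=P(0:I)$ and builds, as you do, an endomorphism $r\mapsto a(r)$ of $A$. The crucial move is then to pick $c\in P\setminus A$ with $c^2\in A$ (possible because every element of $P$ is nilpotent), set $B=(A:c)$ so that multiplication by $c$ gives a surjection $p\colon B\to A$, and apply quasi-projectivity of $B$ to lift $u(r)=a(cr)$ through $p$. The resulting endomorphism $q$ of $B$ satisfies $(0:c)\subseteq(0:q(c))$, and now a \emph{single} application of FP-injectivity to the principal ideal $Rc$ yields $a'\in R$ with $q(c)=ca'$; linearity then forces $a(s)=a's$ for all $s\in A$, and $a'$ is the common solution. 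In short, the nilpotent element $c$ collapses the extension problem from an infinite ideal down to a cyclic one, which is what your argument is missing.
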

\begin{proof}
If $P$ is finitely generated then $R$ is Artinian. In this case $R$ is maximal. Now assume that $P$ is not finitely generated. Let $(a_{\lambda}+I_{\lambda})_{\lambda\in\Lambda}$ be a totally ordered family of cosets such that $I=\cap_{\lambda\in\Lambda}I_{\lambda}\ne 0$. By \cite[Lemma 29]{Couch03} $I\ne aP$ for each $a\in R$. Let $A=P(0:I)$. 

First we assume that $I$ is different of the minimal non-zero ideal when it exists. So, $A\subset P$. We have $I=(0:A)=\cap_{r\in A}(0:r)$ (if $(0:I)$ is not principal then $A=(0:I)$. If not, either $I$ is not principal and from $I=PI$ we deduce that $(0:(0:I))=(0:A)$, or $I$ is principal which implies that $P$ is faithful and  $(0:(0:I))=(0:A)$). Let $r\in A$. We may assume that $I\subset I_{\lambda}$ for each $\lambda\in\Lambda$. Hence there exists  $\lambda\in\Lambda$ such that $I_{\lambda}\subseteq (0:r)$. We put $a(r)=a_{\lambda}r$. If $I_{\mu}\subset I_{\lambda}$ then $(a_{\mu}-a_{\lambda})\in I_{\lambda}$, whence $a_{\mu}r=a_{\lambda}r$. So, in this manner, we define an endomorphism of $A$. Since $P=N$ there exists $c\in P\setminus A$ such that $c^2\in A$. Let $B=(A:c)$. Then $A=cB$ and $c\in B$. Let $p:B\rightarrow A$ be the homomorphism defined by $p(r)=cr$ and $u:B\rightarrow A$ be the homomorphism defined by $u(r)=a(cr)$, for each $r\in B$. The quasi-projectivity of $B$ implies there exists an endomorphism $q$ of $B$ such that $pq=u$. Since $(0:c)\subseteq (0:q(c))$ and $R$ is self FP-injective we deduce that $q(c)=ca'$ for some $a'\in R$ and $a(cr)=cq(r)=q(cr)=rq(c)=a'cr$ for each $r\in B$. Let $\lambda\in\Lambda$. We have $I=\cap_{r\in B}(0:rc)$. Since $I\subset I_{\lambda}$ then $(0:rc)\subseteq I_{\lambda}$ for some $r\in B$. From $I=\cap_{\mu\in\Lambda}I_{\mu}$ we deduce there exists $\mu\in\Lambda$ such that $I_{\mu}\subseteq (0:rc)$. It follows that $(a'-a_{\mu})\in (0:rc)$. But $(a_{\mu}-a_{\lambda})\in I_{\lambda}$, so $a'\in (a_{\lambda}+I_{\lambda})$ for each $\lambda\in\Lambda$. 

Now we assume that $I$ is the minimal non-zero ideal of $R$. In this case $A=P$. Let $s,\ t\in P$ such that $I=Rst$. There exists $\lambda_0\in\Lambda$ such that $I_{\lambda_0}\subseteq Rt\subset P$. Let $\Lambda'=\{\lambda\in\Lambda\mid I_{\lambda}\subseteq I_{\lambda_0}\}$ . Put $J_{\lambda}=(I_{\lambda}: t)$ and $J=\cap_{\lambda\in\Lambda'}J_{\lambda}$. Since $s\in J\setminus I$ then $J$ is not minimal. From $(a_{\lambda}-a_{\lambda_0})\in I_{\lambda_0}$  we deduce there exists $b_{\lambda}\in R$ such that $(a_{\lambda}-a_{\lambda_0})=tb_{\lambda}$ for all $\lambda\in\Lambda'$. If
 $\lambda,\ \mu\in\Lambda'$ such that $I_{\mu}\subseteq I_{\lambda}$ then we easily check that $b_{\mu}\in b_{\lambda}+J_{\lambda}$. From above it follows that there exists $b\in\cap_{\lambda\in\Lambda'}b_{\lambda}+J_{\lambda}$ and it is easy to see that $(a_{\lambda_0}+tb)\in\cap_{\lambda\in\Lambda}a_{\lambda}+I_{\lambda}$. Hence $R$ is almost maximal.
\end{proof}

\begin{proof}[Proof of $(2)\Rightarrow (1)$ in Theorem \ref{T:max}.] Since each ideal $I$ is self-\underline{flat} we have $Z=N$ by Theorem \ref{T:fond}. From the previous proposition we deduce that $Q$ is almost maximal and we may assume that $P\ne Z$. When $Z=0$ $R$ is almost maximal by \cite[Theorem 3.3]{Her84}. Now suppose $Z\ne 0$. We shall prove that $R/Z$ is maximal and we will conclude that $R$ is almost maximal by using \cite[Theorem 22]{Couc06}. Let $0\ne x\in Z$ and $I$ a proper ideal of $R$ such that $Z\subset I$. Since $I$ is quasi-projective then $I$ is $(Qx/Zx)$-projective by \cite[18.2]{Wis91}. Let $q:I\rightarrow Q/Z$ be a homomorphism. If $z\in Z$ and $t\in I\setminus Z$ then $z=z't$ for some $z'\in Z$. So, $q(z)=z'q(t)=0$, whence $q$ factors through $I/Z$. It follows that $I/Z$ is $(Q/Z)$-projective for each ideal $I$ containing $Z$. By \cite[Theorem 3.3]{Her84} $R/Z$ is almost maximal. Suppose that $Z^2=Z$. We have that $Qx$ is $(Qx/Zx)$-projective. Let $q:Q\rightarrow Q/Z$ be a homomorphism and $z\in Z$. There exist $z',\ t\in Z$ such that $z=z't$. So, $q(z)=z'q(t)=0$ whence $q$ factors through $Q/Z$. It follows that $Q/Z$ is quasi-projective. If $Z\ne Z^2$ then $Z$ is principal over $Q$ and there exists $x\in Z$ such that $Z=(0:x)$. Hence $Q/Z\cong Qx$ is quasi-projective. From $R/Z$ almost maximal and \cite[Theorem 3.4]{Her84} we get that $R/Z$ is maximal.
\end{proof}

\begin{theorem}
Let $R$ be a local fqp-ring which is not a chain ring and $N$ its nilradical. Consider the following conditions:
\begin{enumerate}
\item $R$ is a linearly compact ring;
\item each ideal is quasi-projective;
\item $N$ is of finite rank over $R/N$.
\end{enumerate}
Then $(1)\Leftrightarrow ((2)\ \mathrm{and}\ (3))$.
\end{theorem}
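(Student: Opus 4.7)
The reduction uses Theorem \ref{T:localfqp}: since $R$ is a local fqp-ring which is not a chain ring, $R/N$ is a valuation domain and $N$ is a divisible torsion-free $R/N$-module. In particular $N^2=0$ and $(0:N)=N$, and $N$ carries the structure of a vector space over $K=\mathrm{Frac}(R/N)$. Throughout I use without further comment that submodules and quotients of linearly compact modules are linearly compact, and that an extension of two linearly compact modules is linearly compact.

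For $(1)\Rightarrow(3)$: the submodule $N$ of the linearly compact module $R$ is linearly compact as $R$-module, hence as $R/N$-module (the submodule lattices coincide because $N\cdot N=0$). If $\dim_KN$ were infinite, one could choose an independent sequence $(e_i)_{i\ge 1}$ and consider the finitely solvable system $y\equiv e_1+\cdots+e_n\pmod{\bigoplus_{i>n}Ke_i}$, which has no solution in $N$ (a solution would have to be the non-existent infinite sum $\sum_{i\ge 1}e_i$), a contradiction.

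For $(1)\Rightarrow(2)$: fix an ideal $I$, a submodule $X$, and homomorphisms $p,u:I\to I/X$ with $p$ the projection. Corollary \ref{C:qmain} says $I$ is quasi-flat, so for each $r\in I$ and each finitely generated $B\subseteq(0:r)$ there is $y_{r,B}\in I$ with $By_{r,B}=0$ and $p(y_{r,B})=u(r)$. Linear compactness of $I$ applied to the filtered family of affine subsets $y_{r,B}+X\cap(0:_IB)$ produces, for each $r$, an element $y_r\in I$ satisfying $p(y_r)=u(r)$ and $(0:r)y_r=0$. The remaining task is to organise these choices into an $R$-linear map $q:I\to I$ with $q(r)=y_r$, which is carried out through the identification $\mathrm{Hom}_R(I,I)=\varprojlim_J\mathrm{Hom}_R(J,I)$ over the filtered set of finitely generated subideals $J\subseteq I$: quasi-flatness provides, for each $J$, a lift $q_J:J\to I$ with $pq_J=u|_J$, and linear compactness of $R$ is invoked once more to ensure the inverse limit of the affine subsets $\{q_J\in\mathrm{Hom}_R(J,I):pq_J=u|_J\}$ is non-empty, yielding the desired $q$.

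For $(2)$ and $(3)\Rightarrow(1)$: let $I\supseteq N$ be an ideal of $R$. Any $R$-linear $q:I\to I$ satisfies $N\cdot q(N)=q(N^2)=0$, hence $q(N)\subseteq(0:_IN)=N\cap I=N$, so $q$ descends to a map on $I/N$. Applied to a quasi-projective lift of an arbitrary $\bar u:I/N\to(I/N)/\bar X$, this shows that every ideal of the chain ring $R/N$ is quasi-projective as $R/N$-module, so Theorem \ref{T:max} makes $R/N$ almost maximal. Taking $I=N$ in $(2)$, the fact that $N$ is annihilated by itself turns quasi-projectivity over $R$ into quasi-projectivity over $R/N$; writing $N\cong K^n$ via $(3)$, the summand $K$ is therefore quasi-projective over $R/N$. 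Almost maximality together with quasi-projectivity of $K$ then forces $R/N$ to be maximal, by \cite[Theorem 3.4]{Her84}, exactly as at the end of the proof of $(2)\Rightarrow(1)$ in Theorem \ref{T:max}. Maximality of $R/N$ implies $K$, and hence $N=K^n$, is linearly compact as $R/N$-module, and the short exact sequence $0\to N\to R\to R/N\to 0$ of $R$-modules with both ends linearly compact yields that $R$ is linearly compact.

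The main obstacle is the gluing step in $(1)\Rightarrow(2)$: quasi-flatness yields only the pointwise lifts $y_{r,B}$ controlled by finitely generated annihilator pieces, and converting this into an honest $R$-linear endomorphism of $I$ requires two layered applications of linear compactness — first to produce, for each $r$, an element $y_r$ with $(0:r)y_r=0$, and then to solve the inverse-limit problem for the family $\{q_J\}$ indexed by finitely generated subideals of $I$.
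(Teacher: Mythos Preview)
Your arguments for $(1)\Rightarrow(3)$ and for $(2)\wedge(3)\Rightarrow(1)$ are sound and close in spirit to the paper's (the paper extracts $Q'$ as a summand of $N$ rather than descending quasi-projectivity to all ideals of $R/N$, but both routes land on \cite[Theorems~3.3 and~3.4]{Her84} to get maximality of $R/N$, then finish via the extension $0\to N\to R\to R/N\to 0$).

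The direction $(1)\Rightarrow(2)$, however, has a genuine gap in the gluing step. Two points:
\begin{itemize}
\item The sentence ``quasi-flatness provides, for each $J$, a lift $q_J:J\to I$'' is not what quasi-flatness gives: the definition only lifts maps out of \emph{finitely presented} modules, and a finitely generated subideal $J\subseteq N$ (e.g.\ $Rn\cong R/N$ for $0\ne n\in N$) is not finitely presented, since $N$ is never finitely generated here. You can repair this by using your elements $y_r$ directly to build $q_J$ by hand (finitely generated subideals are either principal free or free over $R/N$), but as written the appeal to quasi-flatness is incorrect.
\item More seriously, ``linear compactness of $R$ is invoked once more to ensure the inverse limit of the affine subsets $\{q_J:pq_J=u|_J\}$ is non-empty'' is not justified. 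Linear compactness of $R$ is a statement about cosets in the single module $R$; what you need is non-emptiness of an inverse limit, over the directed poset of finitely generated $J\subseteq I$, of cosets living in the \emph{varying} modules $\mathrm{Hom}_R(J,I)$. Equivalently you need $\varprojlim^1_J\mathrm{Hom}_R(J,X)=0$. Each $\mathrm{Hom}_R(J,I)$ is indeed linearly compact (as a submodule of $I^m$), but turning that into the vanishing of the derived limit requires an additional argument (e.g.\ a Mittag--Leffler verification, or a general lemma on inverse systems of linearly compact modules) that you have not supplied and that does not follow from ``linear compactness of $R$'' alone.
\end{itemize}

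The paper avoids this inverse-limit problem entirely by splitting into two cases. For $I\subseteq N$ it simply cites \cite[Lemma~4.4]{Her84} (submodules of $(Q')^n$ are quasi-projective when $R'=R/N$ is maximal). For $I\supsetneq N$ it exploits that $I/N$ is uniserial to reduce, exactly as in the proof of Theorem~\ref{T:max}, to finding a \emph{single element} $x$ (so that $q$ is multiplication by $x$): one writes $u(r)=rx'_r$ for $r\in I\setminus N$, checks the cosets $x'_r+R'[r]$ are nested, and then linear compactness of an honest quotient of $R$ produces a common $x'$. This sidesteps any $\varprojlim^1$ issue because the problem lives in one linearly compact module, not an inverse system of them.
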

\begin{proof}
$(1)\Rightarrow$ ($(2)$ and $(3)$). Let $R'=R/N$ and $Q'$ its quotient field. Then $R'$ is a maximal valuation domain. Since $N$ is a direct sum of modules isomorphic to $Q'$ and a linearly compact module then $N$ is of finite rank by \cite[29.8]{Wis91}. Let $I$ be an ideal contained in $N$. By \cite[Lemma 4.4]{Her84} $I$ is quasi-projective. Now suppose that $I\nsubseteq N$. In this case $N\subset I$ and since $I/N$ is uniserial, by a similar proof as the one of Theorem \ref{T:max} we show that $I$ is quasi-projective.

($(2)$ and $(3)$)$\Rightarrow (1)$. Since $Q'$ is isomorphic to a direct summand of $N$,  $Q'$ and its submodules are quasi-projective. Hence, by \cite[Theorems 3.3 and 3.4]{Her84} $R'$ is maximal. It follows that $N$ is linearly compact. Whence $R$ is linearly compact by \cite[29.8]{Wis91}.
\end{proof}

\begin{remark}
\textnormal{In the previous theorem:
\begin{enumerate}
\item if $N$ is the maximal ideal of $R$ then each ideal is quasi-projective even if $N$ is not of finite rank over $R/N$; 
\item if $N$ is not the maximal ideal and if $Q/N$ is countably generated over $R/N$ then $(1)\Leftrightarrow (2)$ because $(2)\Rightarrow (3)$ by \cite[Lemma 4.3(b)]{Her84}.
\end{enumerate}}
\end{remark}

\section{$\lambda$-dimension of commutative fqf-rings}\label{S:lambdadim}

In this section $R$ is a commutative ring. We say that $R$ is {\bf arithmetical} if $R_P$ is a  chain ring for each maximal ideal $P$.

An $R$-module $E$ is said to be of {\bf finite $n$-presentation} if there exists an exact sequence:
\[F_n \rightarrow F_{n-1} \rightarrow\cdots F_1 \rightarrow F_0 \rightarrow E \rightarrow 0\]
with the $F_i$'s free $R$-modules of finite rank. We write 
\[\lambda_R(E) = \sup \{ n \mid \mathrm{there\ is\ a\ finite}\ n\mathrm{-presentation\ of}\ E\}.\] If $E$ is not finitely generated we also put $\lambda_R(E)= - 1$.

The {\bf $\lambda$-dimension} of a ring $R$ ($\lambda\mathrm{-dim}(R)$ is the least integer $n$ (or $\infty$ if none such exists) such that $\lambda_R(E) \geq n$ implies $\lambda_R(E) =\infty$. See \cite[chapter 8]{Vas76}. Recall that $R$ is Noetherian if and only if $\lambda$-$dim (R) = 0$ and $R$ is coherent if and only if $\lambda$-$dim (R) \leq 1$. The rings of $\lambda$-dimension $\leq n$ are also called {\bf $n$-coherent} by some authors.

This notion of $\lambda$-dimension of a
ring was formulated in \cite[chapter 8]{Vas76} to study the rings of polynomials or power series over a coherent ring.

\begin{theorem}\label{T:localdim}
Let $R$ be a local fqp-ring. Then $\lambda\mathrm{-dim}(R)\leq 2$.
\end{theorem}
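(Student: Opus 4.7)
The strategy is to apply Theorem~\ref{T:localfqp} to split into two cases. If $R$ is a chain ring, then $R$ is a local arithmetical ring and the bound $\lambda\text{-dim}(R)\leq 2$ comes from \cite{Cou03}. Otherwise, by Theorem~\ref{T:localfqp}, $R/N$ is a valuation domain and $N$ is a divisible torsion-free $R/N$-module, and it is this non-chain case that requires real work.

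In the non-chain case, I first record the structural consequences of the hypotheses: the very fact that $N$ carries an $R/N$-module structure forces $N^2=0$, and combining this with divisibility and torsion-freeness of $N$ yields the annihilator identities $(0:a)=0$ for $a\in R\setminus N$ and $(0:a)=N$ for $0\neq a\in N$. As a consequence, a finitely generated ideal $I$ of $R$ falls into one of two types: either $I\not\subseteq N$, in which case divisibility absorbs $I\cap N$ into a chosen generator $a\in I\setminus N$ so that $I=Ra\cong R$, or $I\subseteq N$, in which case $I$ is a finitely generated torsion-free module over the valuation domain $R/N$, hence free of finite rank over $R/N$.

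Next I extend this dichotomy to finitely generated submodules $K\subseteq R^n$, and use it to analyze finite presentation. Since a nonzero finitely generated free $R/N$-module has $R/N$ as a direct summand, and since direct summands of finitely presented modules are finitely presented, $K$ being finitely presented over $R$ forces $R/N$ itself to be finitely presented over $R$ whenever the torsion-free $R/N$-piece of $K$ is nonzero. But $R/N$ is finitely presented over $R$ if and only if $N$ is finitely generated, and divisibility plus torsion-freeness of $N$ then forces $R/N$ to be a field, making $R$ local Artinian and hence Noetherian, so that $\lambda\text{-dim}(R)=0$. In the remaining subcase, where $N$ is not finitely generated, the torsion-free $R/N$-piece of any finitely presented $K$ must vanish, so $K$ is a free $R$-module and $R^n/K$ is likewise free. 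Iterating this observation for the higher syzygies of a finitely presented module $M$ shows that $\lambda_R(M)\geq 2$ implies $\lambda_R(M)=\infty$, whence $\lambda\text{-dim}(R)\leq 2$.

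The main obstacle I expect is the coordinate-change step for finitely generated submodules of $R^n$: unlike the case of ideals, a finitely generated free $R/N$-submodule of $R^n/NR^n$ need not be a direct summand of $R^n/NR^n$ when $R/N$ is a valuation domain that is not a field, so splitting off the free $R$-part of $K$ inside $R^n$ relies crucially on the identity $N^2=0$ (so that $1+n$ is a unit for every $n\in N$) to manufacture the required change of basis. Once the structural decomposition of $K$ is in place, tracking finite presentation through it is routine.
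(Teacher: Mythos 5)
Your proposal is correct in substance, but it takes a genuinely different route from the paper. The paper reduces to Proposition~\ref{P:localdim} and works with a \emph{minimal} finite $2$-presentation: when $N$ is maximal it uses a length argument (the syzygies lie in $NF_{i-1}$ and $N^2=0$, forcing $F_1=0$), and when $N$ is not maximal it uses flatness of $P$, the vanishing of $\mathrm{Tor}_2^R(G,R/P)$ and Nakayama to kill the second syzygy; this also yields the sharper statements that $2$-presented modules are free, resp.\ of projective dimension $\leq 1$, and that $\lambda$-$\dim(R)=2$ when $R$ is not Artinian. You instead prove a structure result for arbitrary finitely generated submodules $K\subseteq R^n$, namely $K\cong R^s\oplus (R/N)^u$, and deduce the bound from the fact that a nonzero $(R/N)$-summand of a finitely presented module forces $N$ to be finitely generated, hence $R$ Artinian; this gives a uniform argument with no case split on whether $N$ is maximal. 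The decomposition you flag as the main obstacle does hold, but the mechanism is not really ``$1+n$ is a unit'': lift an $R/N$-basis of the image $\bar K\subseteq (R/N)^n$ to get a free submodule $F\cong R^s$ (torsionfreeness of $N$ gives injectivity of the relevant matrix on $N^s$), note $NK=NF\subseteq F$ so $K/F$ is a finitely generated $R/N$-module, use divisibility to see $K/F$ is torsionfree hence free over the valuation domain $R/N$, and lift a basis of $K/F$ into $K\cap N^n$; since $N^2=0$ these lifts generate a complement of $F$. Two small blemishes: your parenthetical claim that ``$R^n/K$ is likewise free'' is false in general (take $K=Rr$ with $r$ regular and not a unit) but is not needed, since once the first syzygy $G_1$ of a $2$-presented module is known to be free one has $\mathrm{pd}(G)\leq 1$ and hence $\lambda_R(G)=\infty$; and the Artinian subcase should be read as a dichotomy on $R$ (either $N$ is finitely generated, in which case $R$ is Artinian and $\lambda$-$\dim(R)=0$, or $N$ is not finitely generated and every finitely presented submodule of a finite free module is free), which is what you evidently intend.
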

\begin{proof}
By \cite[Theorem II.11]{Cou03} $\lambda\mathrm{-dim}(R)\leq 2$ if $R$ is a chain ring. Theorem \ref{T:localfqp} and the following proposition complete the proof.
\end{proof}

\begin{proposition}\label{P:localdim}
Let $R$ be a local fqp-ring which is not a chain ring and $N$ its nilradical. Then:
\begin{enumerate}
\item either $R$ is Artinian or $\lambda\mathrm{-dim}(R)=2$ if $N$ is maximal;
\item $\lambda\mathrm{-dim}(R)=2$ if $N$ is not maximal.
\end{enumerate}
Moreover, if $G$ is a finitely $2$-presented module then:
\begin{enumerate}
\item[(3)] $G$ is free if $N$ is maximal and not finitely generated;
\item[(4)] $G$ is of projective dimension $\leq 1$ if $N$ is not maximal.
\end{enumerate}
\end{proposition}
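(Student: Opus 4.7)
The plan is to lean on Theorem \ref{T:localfqp}: since $R$ is local fqp but not a chain ring, $D := R/N$ is a valuation domain and $N$ is a divisible torsion-free $D$-module, so $N^2 = 0$ and $\mathrm{Ann}(n) = N$ for every $0 \neq n \in N$. I would record at the outset the preliminary observation that whenever $D$ is not a field, $N$ cannot be finitely generated over $R$: divisibility gives $\mathfrak{m}_D \cdot N = N$, so Nakayama would otherwise force $N = 0$, contradicting that $R$ is not a chain ring. It follows from the sequence $0 \to N \to R \to D \to 0$ that in every situation the proposition actually addresses ($N$ maximal but non-f.g., or $N$ not maximal), $D$ fails to be finitely presented over $R$, so for any $0 \neq n \in N$ the principal ideal $Rn \cong D$ (or $\cong k := R/N$ when $N = P$) is a finitely generated but non-finitely-presented ideal; in particular $R$ is not coherent, which gives the lower bound $\lambda\mathrm{-dim}(R) \geq 2$ needed in both (1) and (2).

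For (3) I would take a minimal free presentation $0 \to K \to R^m \to G \to 0$. Because $N = P$, Nakayama places $K$ inside $NR^m$, and then $N^2 = 0$ gives $NK = 0$, so $K$ is a finitely generated vector space over $k$, hence $K \cong k^l$ as an $R$-module. Since $G$ is finitely $2$-presented, $K$ is finitely presented; but $k$ is not finitely presented (its minimal presentation has the non-f.g.\ ideal $N$ as kernel), so $l = 0$ and $G \cong R^m$ is free. Part (1) is then immediate: when $N$ is finitely generated, $N$ is a finite-dimensional $k$-vector space and $R$ is of finite length, hence Artinian; otherwise (3) together with the preliminary lower bound gives $\lambda\mathrm{-dim}(R) = 2$.

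For (4) I would again start from a minimal presentation $0 \to K \to R^m \to G \to 0$ (so $K \subseteq P R^m$, with $P \supsetneq N$), and aim to show $K$ is free over $R$. Set $L := K \cap NR^m$. Since $N^2 = 0$, $L$ is annihilated by $N$ and is therefore a $D$-submodule of the $\mathrm{Frac}(D)$-vector space $N^m$, while $K/L$ embeds into $R^m/NR^m = D^m$ as a finitely generated $D$-submodule of a free $D$-module over the valuation domain $D$, forcing $K/L \cong D^l$ for some $l \geq 0$. Choosing $l$ minimal generators of $K$ whose images in $K/L$ form a $D$-basis, one obtains a surjection $R^l \to K$; by Schanuel's lemma applied against the finitely generated second syzygy of $G$ (available because $G$ is finitely $2$-presented), its kernel $L'$ is finitely generated, torsion-free, and annihilated by $N$ (landing in $L$), hence a finitely generated free $D$-module which, as in (3), must be zero because $D$ is not finitely presented over $R$. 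The technical heart of the argument, and the step I expect to be the main obstacle, is the construction — from the $2$-presentation data and the divisibility of $N$ — of $R$-generators of $K$ that project to a $D$-basis of $K/L$ and interact with $N$ in a sufficiently controlled way to keep the kernel inside $L$. Once (4) is in hand, (2) follows just as (1) did: the first syzygy of a finitely $2$-presented module being free yields $\lambda\mathrm{-dim}(R) \leq 2$, and the lower bound comes from the non-coherence established at the outset.
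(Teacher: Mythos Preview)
Your treatment of (1), (3), and the lower bound $\lambda\mathrm{-dim}(R)\geq 2$ is correct and essentially coincides with the paper's argument. For (4), however, your outline has a genuine gap beyond the one you flag. First, the assertion that $K$ is generated by exactly $l=\mathrm{rank}_D(K/L)$ elements amounts to $L\subseteq PK$, which you do not justify; the ``controlled interaction with $N$'' you worry about (the kernel landing in $NR^l$) is in fact automatic once the images form a $D$-basis, so the real obstacle is surjectivity of $R^l\to K$. Second---and this you do not flag---even granting such a surjection with kernel $L'\cong D^j$, your ``as in (3)'' step needs $L'$ to be finitely \emph{presented}, whereas the $2$-presentation of $G$ (via Schanuel) only yields $L'$ finitely \emph{generated}; since $D^j$ is always finitely generated over $R$, this does not force $j=0$.

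The paper bypasses both difficulties with a single observation missing from your outline: every $r\in P\setminus N$ is regular and every finitely generated ideal not contained in $N$ is principal, so $P$ is a directed union of free cyclic ideals and hence flat. Consequently $R/P$ has flat dimension $\leq 1$, giving $\mathrm{Tor}_2^R(G,R/P)=0$, i.e.\ $\mathrm{Tor}_1^R(G_1,R/P)=0$. With a minimal resolution (so $G_2\subseteq PF_1$), the induced exact sequence $0\to G_2/PG_2\to F_1/PF_1\to G_1/PG_1\to 0$ has its last map an isomorphism; hence $G_2/PG_2=0$, and Nakayama gives $G_2=0$. This proves (4) in one stroke, without any hands-on analysis of $K/L$.
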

\begin{proof}
$(1)$ and $(3)$. If $N$ is an $R/N$-vector space of finite dimension then $R$ is Artinian. Assume that $N$ is not of finite dimension over $R/N$. Let $G$ be an $R$-module of finite $2$-presentation. So, there exists an exact sequence
\[F_2\xrightarrow{u_2} F_1\xrightarrow{u_1} F_0\rightarrow G\rightarrow 0,\]
where $F_i$ is free of finite rank for $i=0,1,2$. Let $G_i$ be the image of $u_i$ for $i=1,2$. Since $R$ is local we may assume that $G_i\subseteq NF_{i-1}$ for $i=1,2$. Then $G_i$ is a module of finite length for $i=1,2$. It follows that $F_1$ is of finite length too. This is possible only if $F_1=0$. So, $G$ is free. Hence $\lambda_R(G)=\infty$ and $\lambda\mathrm{-dim}(R)\leq 2$. Let $0\ne r\in N$. Since $(0:r)=N$, $\lambda_R(R/rR)=1$, whence $\lambda\mathrm{-dim}(R)=2$.

$(2)$ and $(4)$. Let $P$ be the maximal ideal of $R$. Each $r\in P\setminus N$ is regular. So, $Rr$ is free and since each finitely generated ideal which is not contained in $N$ is principal, $P$ is flat. Let $G$, $G_1$ and $G_2$ be as in $(1)$. Since $R$ is local we may assume that $G_i\subseteq PF_{i-1}$ for $i=1,2$. Then $\mathrm{Tor}^R_1(G_1,R/P)\cong\mathrm{Tor}^R_2(G,R/P)=0$. So, the following sequence is exact:
\[0\rightarrow G_2/PG_2\rightarrow F_1/PF_1\xrightarrow{v} G_1/PG_1\rightarrow 0,\]
where $v$ is induced by $u_1$. Since $v$ is an isomorphism it follows that $G_2/PG_2=0$, and by Nakayama Lemma $G_2=0$. So, $G_1$ is free. Now, we do as in $(1)$ to conclude ($N$ is not finitely generated because it is divisible over $R/N$).
\end{proof}

Let $A$ be a ring and $E$ an $A$-module. The {\bf trivial ring extension} of $A$ by $E$ (also called the idealization of $E$ over $A$) is the ring $R := A\propto E$ whose underlying group is $A\times E$ with multiplication given by $(a, e)(a', e') = (aa', ae' + a'e)$. Let $R$ be a ring. For a polynomial $f\in R[X]$, denote  by $c(f)$ (the content of $f$) the ideal of $R$ generated by the coefficients of $f$. We say that $R$ is {\bf Gaussian} if $c(fg)=c(f)c(g)$ for any two polynomials $f$ and $g$ in $R[X]$. 

The following example shows that we cannot replace "fqf-ring" with "Gaussian ring" in Theorem \ref{T:localdim}.

\begin{example}
Let $D$ be a non almost maximal valuation domain and $M$ its maximal ideal. Let $0\ne d\in M$ such that $D/Dd$ is not maximal and $E$ the injective hull of $D/Dd$. Consider $R=D\propto E$ the trivial ring extension of $D$ by $E$. Then $R$ is a Gaussian local ring and $\lambda\mathrm{-dim}(R)\geq 3$.
\end{example}
\begin{proof}
By \cite[Theorem 17]{Couch03} $E$ is not uniserial. By \cite[Corollary 4.3]{Cou15} $R$ is Gaussian but not a fqp-ring because $E$ is neither uniserial nor torsionfree. Let $e\in E$ such that $(0:e)=Dd$. We put $a=(0,e)$ and $b=(d,0)$. Then $(0:a)=Rb$ and  $(0:b)=\{(0,x)\mid dx=0\}=0\propto E[d]$, where $E[d]=\{x\in E\mid dx=0\}$. If $D'=D/Dd$ then $E[d]$ is isomorphic to the injective hull of $D'$ over $D'$ and $D'\ne E[d]$ because $D'$ is not maximal. By \cite[Theorem 11]{Couch03} $D'$ is an IF-ring and consequently $E[d]$ and $E[d]/D'$ are flat over $D'$. Then $E[d]$ is not finitely generated, else $E[d]/D'$ is a free $D'$-module and this contradicts that $E[d]$ is an essential extension of $D'$. So, $(0:b)$ is not finitely generated, $\lambda_R(R/Ra)=2$ and $\lambda\mathrm{-dim}(R)\geq 3$.
\end{proof}

\begin{proposition}
Let $R$ be a fqf-ring with a unique minimal prime ideal $N$. The following assertions hold:
\begin{enumerate}
\item $R_P$ is not a chain ring for each maximal ideal $P$ if $R$ is not arithmetical;
\item $R$ is a fqp-ring.
\end{enumerate}
\end{proposition}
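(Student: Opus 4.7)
Both parts rest on the following observation: since $R$ is fqf and $N$ is the unique minimal prime, each $R_P$ (for $P$ maximal) is a local fqp-ring whose unique minimal prime, and nilradical, is $N_P$. By Theorem~\ref{T:localfqp}, each $R_P$ is either a chain ring or is of ``second type'': $R_P/N_P$ is a valuation domain and $N_P$ is a torsionfree divisible $R_P/N_P$-module. Let $K$ denote the quotient field of $R/N$. In the second case $N_P^2=0$ and $N_P$ is a $K$-vector space; moreover, a direct check (using divisibility to show every $R_P$-ideal not contained in $N_P$ contains $N_P$) reveals that $R_P$ is actually a chain ring whenever $\dim_K N_P\le 1$. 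Thus $R_P$ is non-chain precisely when $\dim_K N_P\ge 2$.

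For (1), I argue by contrapositive. Assume $R_{P_0}$ is a chain ring for some maximal $P_0$, and suppose for contradiction that $R_Q$ is non-chain for some maximal $Q$. Then $\dim_K N_Q\ge 2$, so there exist $a,b\in N$ whose images in $N_Q$ are $K$-linearly independent. In the chain ring $R_{P_0}$ the ideals $(a/1)R_{P_0}$ and $(b/1)R_{P_0}$ are comparable, and clearing denominators yields a relation $ua=vb$ in $R$ with $u,v\in R$ and $u$ a product of two elements outside $P_0$; in particular $u\notin P_0$, whence $u\notin N$ (this is the crucial use of the unique-minimal-prime hypothesis, which forces $N\subseteq P_0$), so $\bar u\ne 0$ in $K$. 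Localizing $ua=vb$ at $Q$ and reading it inside the $K$-vector space $N_Q$ (the $R_Q$-action on $N_Q$ factors through $R_Q/N_Q\subseteq K$ because $N_Q^2=0$) gives $a=(\bar v/\bar u)b$, contradicting $K$-linear independence.

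For (2), if $R$ is arithmetical then the conclusion follows from the inclusion arithmetical $\subseteq$ fqp noted in the introduction. Otherwise, by (1), every $R_P$ is of second type, and the local properties aggregate to the following global data: $N^2=0$, $R/N$ is a Pr\"ufer domain, and $N$ is a torsionfree divisible $R/N$-module (equivalently, a $K$-vector space). Let $I$ be a finitely generated ideal of $R$. If $I\subseteq N$, then $I$ is a f.g.\ torsionfree module over the Pr\"ufer domain $R/N$, hence projective over $R/N$; torsionfree-ness also forces $(0:I)=N$, so $I$ is (quasi-)projective over $R/(0:I)=R/N$ and therefore quasi-projective over $R$. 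If $I\not\subseteq N$, pick $a\in I\setminus N$; then $a$ is a non-zero-divisor of $R$ (because $N$ is torsionfree over the domain $R/N$), and the local analysis of second-type rings (using divisibility of $N_P$) shows that every f.g.\ $R_P$-ideal not contained in the nilradical is principal, so $I_P$ is principal for every $P$. Being f.g., locally principal, and containing the regular element $a$, $I$ is invertible in $R$, hence projective.

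The main obstacle is the local-to-global passage in (2): verifying that Pr\"ufer-ness of $R/N$, torsionfree-ness and divisibility of $N$, and the identity $N^2=0$ all globalize correctly, and that $I_P$ is principal for every $P$ when $I\not\subseteq N$. The unique-minimal-prime hypothesis is used throughout to transport information between different maximal ideals.
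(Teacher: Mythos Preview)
Your proof is correct and follows the same overall strategy as the paper's. For part~(2) the arguments are essentially identical: globalize the local structure (so that $N^2=0$, $R/N$ is Pr\"ufer, and $N$ is torsionfree divisible over $R/N$), then treat $I\subseteq N$ via projectivity of finitely generated torsionfree modules over a Pr\"ufer domain, and $I\nsubseteq N$ via local principality. Your explicit verification that $(0:I)=N$ and that any $a\in I\setminus N$ is regular is a nice touch; the paper leaves these implicit and simply cites Bourbaki for the passage from ``locally free of rank one'' to ``projective''.

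For part~(1) the two arguments differ in implementation, though not in spirit. The paper observes that whenever $R_L$ is of second type one has $N_L=N_N$ (divisibility makes every element of $R\setminus N$ act invertibly on $N_L$), so $N_N$ is a $K$-vector space of dimension $>1$; since $N_N$ is a further localization of every $N_P$, and localizations of uniserial modules are uniserial, no $N_P$ can be uniserial, hence no $R_P$ is a chain ring. Your argument reaches the same conclusion by a direct element chase: lift two $K$-independent elements of $N_Q$ to $a,b\in N$, use comparability in the chain ring $R_{P_0}$ to produce a relation $ua=vb$ with $u\notin N$, and read this relation in the $K$-vector space $N_Q$ to contradict independence. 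Your route avoids the ``localization of uniserial is uniserial'' step and is perhaps more elementary; the paper's route is slightly more conceptual in isolating $N_N$ as the common invariant controlling all maximal localizations. Both are valid and short.
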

\begin{proof}
$(1)$. There exists a maximal ideal $L$ such that $R_L$ is a local fqp-ring which is not a chain ring. So, $N_L$ is torsionfree and divisible over $R_L/N_L$. Moreover, since $N_L$ is not uniserial over $R_L$, by \cite[Lemma 3.8]{AbJaKa11} there exist $a,b\in N_L$ such that $aR_L\cap bR_L=0$. It follows that $N_L=N_N$ and it is a vector space over $R_N/N_N$ of dimension $>1$. Let $P$ be a maximal ideal. Then $N_N$ is a localization of $N_P$. Consequently $N_P$ is not uniserial. Hence, $R_P$ is not a chain ring.

$(2)$. It follows that $N$ is a torsionfree divisible module over $R/N$. So, if $I$ is a finitely generated ideal contained in $N$ then $I$ is a finitely generated flat module over the Pr\"ufer domain $R/N$. So, $I$ is projective over $R/N$. Now, if $I\nsubseteq N$ then $I_P$ is a free $R_P$-module of rank $1$. We conclude by \cite[Chap.2, \S 5, 3, Th\'eor\`eme 2]{Bou61} that $I$ is projective.
\end{proof}

\begin{corollary}\label{C:fqfdim}
Let $R$ be a fqf-ring with a unique minimal prime ideal $N$. Assume that $R$ is not arithmetical. Then either $R$ is Artinian or $\lambda\mathrm{-dim}(R)=2$.
\end{corollary}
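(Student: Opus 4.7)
My plan is to combine the previous proposition with the local Proposition \ref{P:localdim}. The Artinian case requires no work, so I assume $R$ is not Artinian and prove $\lambda\mathrm{-dim}(R)=2$ by two inequalities, using throughout what the previous proof actually produced: $R$ is a fqp-ring, $R/N$ is a Pr\"ufer domain, $N$ is a divisible torsionfree $R/N$-module, and $R_P$ is never a chain ring.

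For the bound $\lambda\mathrm{-dim}(R)\leq 2$, take a finitely $2$-presented $R$-module $G$ with presentation $F_2\to F_1\to F_0\to G\to 0$ and first syzygy $G_1=\ker(F_0\to G)$, which is finitely presented. I claim $G_1$ is projective. At any maximal ideal $P$ the ring $R_P$ is a local fqp-ring which is not a chain ring; moreover $R_P$ cannot be Artinian, for that would force $N_P$ to be the only prime of $R_P$, hence $N=P$, making $R=R_P$ Artinian itself. Hence Proposition \ref{P:localdim} applies to $R_P$: if $N_P$ is the maximal ideal of $R_P$ (necessarily not finitely generated, otherwise $R_P$ would be Artinian), case (3) gives $G_P$ free; otherwise case (4) applies and the first syzygy of $G_P$ is free. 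Either way $(G_1)_P$ is free over $R_P$. Since $G_1$ is finitely presented and locally free, it is finitely generated projective. Writing $G_1\oplus Q\cong R^n$ with $Q$ finitely generated projective and iterating the same procedure on $Q$ produces a resolution of $G_1$ by finitely generated free $R$-modules of arbitrary length, giving $\lambda_R(G)=\infty$.

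For $\lambda\mathrm{-dim}(R)\geq 2$, I exhibit an $R$-module with $\lambda_R=1$. First $N\neq 0$: else $R$ is a domain, every finitely generated ideal $I$ has trivial annihilator, and quasi-flatness combined with Corollary \ref{C:flatan} makes $I$ flat; $R$ would then be a Pr\"ufer (hence arithmetical) domain. Divisibility and torsionfreeness of $N$ over $R/N$ imply $N_P^{2}=0$ at every maximal $P$, hence $N^{2}=0$ globally; combined with torsionfreeness this yields $(0:x)=N$ for every $0\neq x\in N$. Moreover $N$ is not finitely generated: if $R/N$ is a field then $R$ is local with $N$ maximal and $R$ not Artinian forces $\dim_{R/N}N=\infty$; if $R/N$ is not a field then $N$ is a nonzero vector space over the fraction field $K$ of $R/N$ and so admits $K$ as a quotient, whereas $K$ is never finitely generated over a non-field domain. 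Hence $R\xrightarrow{\cdot x}R\to R/Rx\to 0$ has non-finitely-generated first syzygy $(0:x)=N$, giving $\lambda_R(R/Rx)=1$ and $\lambda\mathrm{-dim}(R)\geq 2$.

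The main obstacle is the first inequality: one must confirm that the Artinian subcase of Proposition \ref{P:localdim} never occurs at any $R_P$ under the standing hypothesis, and then invoke the local-global principle (a finitely presented locally free module is projective) to conclude that $G_1$ admits an arbitrarily long resolution by finitely generated free modules.
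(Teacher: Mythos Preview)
Your proof is correct and follows essentially the same route as the paper: you use Proposition~\ref{P:localdim} at each maximal ideal to see that the first syzygy $G_1$ is locally free, then invoke that a finitely presented locally free module is projective to get $\lambda_R(G)=\infty$. Your organization differs slightly---rather than splitting on whether $N$ is maximal in $R$, you argue uniformly that no $R_P$ is Artinian and then handle the two subcases of Proposition~\ref{P:localdim} locally---but this amounts to the same case distinction, since $N_P$ is maximal in $R_P$ precisely when $N=P$, i.e.\ when $R$ is already local. You also spell out the lower bound $\lambda\text{-}\dim(R)\geq 2$ explicitly (via $(0:x)=N$ not finitely generated), which the paper leaves implicit in the non-maximal case; this is a welcome clarification. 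One small remark: the fact that $N$ is an $R/N$-module already gives $N^2=0$ globally, so the detour through $N_P^2=0$ is unnecessary, though harmless.
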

\begin{proof} When $N$ is maximal we use Proposition \ref{P:localdim}. Now assume that $N$ is not maximal. Let $G$ be a $R$-module such that $\lambda_R(E)\geq 2$. We use the same notations as in the proof of Proposition \ref{P:localdim}. This proposition implies that $G_1$ is locally free. Since $G_1$ is a finitely presented flat module, we successively deduce that $G_1$ is projective, $G_2$ is projective and $\ker(u_2)$ is finitely generated. 
\end{proof}

An integral domain $D$ is {\bf almost Dedekind} is $D_P$ is a Noetherian valuation domain for each maximal ideal $P$. 

The following example shows that we cannot remove the assumption "$R$ is not arithmetical" in Corollary \ref{C:fqfdim}.
\begin{example}\label{E:ex3}
Let $D$ be an almost Dedekind domain which is not Noetherian (see \cite[Example III.5.5]{FuSa01}), $Q$ its quotient field, $P'$ a maximal ideal of $D$ which is not finitely generated and $E=Q/D_{P'}$. Let $R=D\propto E$ and $N=\{(0, y)\mid y\in E\}$. Then $R$ is an arithmetical ring, $N$ is its unique minimal prime ideal and $\lambda\mathrm{-dim}\ R=3$. Moreover, $R_P$ is IF where $P$ is the maximal ideal of $R$ satisfying $P'=P/N$, and $R_L$ is a valuation domain for each maximal ideal $L\ne P$.
\end{example}
\begin{proof}
For each maximal ideal $L$ of $R$ let $L'=L/N$. Let $p\in P'$ such that $P'D_{P'}=pD_{P'}$, $x=1/p+D_{P'}$,  $a=(p,0)$ and $b=(0, x)$. Since $0$ is the sole prime ideal of $D$ contained in $P'\cap L'$ for each maximal ideal $L'\ne P'$, then $E_{L'}=0$. So, $R_L=D_{L'}$. Since $E$ is  uniserial and divisible over $D_{P'}$, $R_P$ is a chain ring by \cite[Proposition 1.1]{Cou15}. So, $R$ is arithmetical. We have $(0:_{R_P}b)=aR_P=PR_P$. By \cite[Theorem 10]{Couch03} $R_P$ is IF. Clearly $Dx$ is the minimal submodule of $E$. So, $P'=(0:x)$ and $D_{P'}x=Dx$. If $q\in Q\setminus D_{P'}$ then $q=sp^n/t$ where $s,t\in D\setminus P'$ and $n$ an integer $>0$. So, $pq\in D_{P'}$ if and only if $n=1$. It follows that $Dx=\{y\in E\mid py=0\}$. Let $\hat{a}$ and $\hat{b}$ be the respective multiplications in $R$ by $a$ and $b$. Then $\ker(\hat{a})=Rb$ and $\ker(\hat{b})=P$ which is not finitely generated. So, $\lambda_R(R/Ra)=2$ and $\lambda\mathrm{-dim}(R)=3$ by \cite[Theorem II.1]{Cou03}.
\end{proof}

\begin{theorem}
Let $R$ be a fqf-ring. Then $\lambda\mathrm{-dim}(R)\leq 3$.
\end{theorem}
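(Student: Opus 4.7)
The plan is to adapt the local-to-global strategy that Couchot used in \cite{Cou03} to bound the $\lambda$-dimension of arithmetical rings by $3$. Starting from an $R$-module $G$ with $\lambda_R(G) \geq 3$, I would fix a finite $3$-presentation
\[F_3 \xrightarrow{u_3} F_2 \xrightarrow{u_2} F_1 \xrightarrow{u_1} F_0 \to G \to 0\]
with each $F_i$ free of finite rank, denote its successive syzygies by $L_i$, and aim to show inductively that every $L_i$ remains finitely generated, which is equivalent to $\lambda_R(G) = \infty$.

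The first step is to reduce to the local case by localizing at an arbitrary maximal ideal $P$. Since fqf is a local property (as recalled in the introduction), $R_P$ is a local fqf-ring and therefore a local fqp-ring, so Theorem \ref{T:localdim} gives $\lambda\mathrm{-dim}(R_P)\leq 2$. The localized sequence exhibits $G_P$ as $3$-presented over $R_P$, so $\lambda_{R_P}(G_P) = \infty$; by repeated use of Schanuel's lemma, the localizations $(L_i)_P$ of our syzygies are stably isomorphic to the syzygies of an arbitrarily long $R_P$-free resolution of $G_P$, hence finitely generated for every $i$. Moreover, when $R_P$ is not a chain ring, Proposition \ref{P:localdim} gives the stronger conclusion that $(L_i)_P$ is in fact $R_P$-free of finite rank for $i \geq 2$, so the localized resolution splits off free pieces after a bounded number of steps.

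The main obstacle is the passage from these pointwise statements to global finite generation of the $L_i$, which is precisely the standard ``$+1$'' gap between the $\lambda$-dimension of a ring and that of its localizations at maximal ideals. To cross this gap I would invoke the general local-to-global machinery developed by Couchot in \cite[Theorem II.1]{Cou03} (already used in the same spirit in our Example \ref{E:ex3}): it guarantees that if $\lambda\mathrm{-dim}(R_P)\leq n$ for every maximal ideal $P$, then $\lambda\mathrm{-dim}(R)\leq n+1$. Applied with $n=2$, combined with the uniform local bound established above, this yields $\lambda_R(G)=\infty$ for every $G$ with $\lambda_R(G)\geq 3$, completing the proof that $\lambda\mathrm{-dim}(R)\leq 3$.
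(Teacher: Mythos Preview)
Your reduction strategy rests on a misreading of \cite[Theorem II.1]{Cou03}. That result is not a general principle of the form ``$\lambda\mathrm{-dim}(R_P)\leq n$ for all maximal $P$ implies $\lambda\mathrm{-dim}(R)\leq n+1$''; it is the statement that \emph{arithmetical} rings have $\lambda$-dimension $\leq 3$, and its proof exploits that every $R_P$ is a chain ring. The present paper does not invoke Theorem~II.1; it says ``we do as in the proof of'' and then spends most of the argument handling the genuinely new situations that arise when $R_P$ is a non-chain local fqp-ring. No black-box local-to-global transfer of $\lambda$-dimension is available here, so your last paragraph does not constitute a proof.

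Concretely, the obstruction you miss is the case where $R_P$ is Artinian (i.e.\ $P$ minimal and $PR_P$ finitely generated) but $P$ is \emph{not} an isolated point of $\mathrm{Max}\,R$. In that case Proposition~\ref{P:localdim} gives no freeness conclusion for the localized syzygies---an Artinian local ring has $\lambda$-dimension $0$, so every module is $\infty$-presented, yet none of the $(L_i)_P$ need be free. Consequently there is no way to ``spread out'' a free resolution from $P$ to a basic open $D(t_P)$ using the standard finite-presentation argument you allude to. The paper handles this case by an explicit construction: it shows $P^2$ is a pure ideal, manufactures an element $t\in R\setminus P$ with $I_t^2=0$ for a suitable finitely generated ideal $I$, and then builds by hand a resolution over $R_t$ whose syzygies stabilize to $I_tF'_k$, which are visibly finitely generated. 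Without this (or an equivalent) argument, your proof has a genuine gap.
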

\begin{proof}  Let $G$ be an $R$-module of finite $3$-presentation. So, there exists an exact sequence
\[F_3\xrightarrow{u_3} F_2\xrightarrow{u_2} F_1\xrightarrow{u_1} F_0\rightarrow G\rightarrow 0,\]
where $F_i$ is free of finite rank for $i=0,1,2,3$. Let $G_i$ be the image of $u_i$ for $i=1,2,3$. 

We do as in the proof of \cite[Theorem II.1]{Cou03}. For each maximal ideal $P$ we shall prove that there exist $t_P\in R\setminus P$ such that $\lambda_{R_{t_P}}(G_{t_P})\geq 4$. We end as in the proof of \cite[Theorem II.1]{Cou03} to show that $\ker(u_3)$ is finitely generated, by using the fact that $\mathrm{Max}\ R$ is a quasi-compact topological space.

 Let $P$ be a maximal ideal. First assume that $R_P$ is a chain ring. As in the proof of \cite[Theorem II.1]{Cou03} we show there exists $t_P\in R\setminus P$ such that $\lambda_{R_{t_P}}(G_{t_P})\geq 4$.

Now assume that $R_P$ is not a chain ring. We suppose that either $P$ is not minimal or $P$ is minimal but $PR_P$ is not finitely generated over $R_P$. In this case $(G_1)_P$ is free over $R_P$ by Proposition \ref{P:localdim}. Since $G_1$ is finitely presented, there exists $t_P\in R\setminus P$ such that $(G_1)_{t_P}$ is free over $R_{t_P}$ by \cite[Chapitre 2, \S 5, 1, Corollaire de la proposition 2]{Bou61}. It follows that $(G_2)_{t_P}$ and $(G_3)_{t_P}$ are projective. So, $\ker((u_3)_{t_P})$ is finitely generated. 

Finally assume that $R_P$ is not a chain ring, $P$ is minimal and $PR_P$ is finitely generated over $R_P$. We have $P^2R_P=0$. Since $P^2R_L=R_L$ for each maximal ideal $L\ne P$, $P^2$ is a pure ideal of $R$. It follows that $R/P^2$ is flat. Clearly $R/P^2$ is local . So, $R_P=R/P^2$. If $P^2$ is finitely generated then $P^2=Re$ where $e$ is an idempotent of $R$. So, if $t_P=1-e$  then $D(t_P)=\{P\}$, $R_{t_P}=R_P$ and $\ker((u_3)_{t_P})$ is finitely generated. If $P^2$ is not finitely generated then $P=I+P^2$ where $I$ is finitely generated but not principal because so is $P/P^2$. Since $I^2$ is a finitely generated subideal of the pure ideal $P^2$ there exists $a\in P^2$ such that $r=ar$ for each $r\in I^2$. It follows that $(1-a)I^2=0$. Hence $I^2R_t=0$ where $t=(1-a)$ and $IR_t\ne 0$ because for each $s\in I\setminus P^2$, $s\ne sa$. Since $G_t$ is finitely generated, after possibly multiplying $t$ with an element in $R\setminus P$, we may assume that $G_t$ has a generating system $\{g_1,\dots,g_p\}$ whose image in $(G_t)_P$ is a minimal generating system of $(G_t)_P$ containing $p$ elements. Let $F'_0$ be a free $R_t$-module with basis $\{e_1,\dots,e_p\}$, $\pi:F'_0\rightarrow G_t$ be the homomorphism defined by $\pi(e_k)=g_k$ for $k=1,\dots,p$ and $G'_1=\ker(\pi)$. We get the following commutative diagram with exact rows and columns:

\[\begin{matrix}
 {} & {} & 0 & {} & 0 & {} & 0 & {} & {}\\
{} & {} & \downarrow & {} & \downarrow & {} & \downarrow & {} & {} \\
0 & \rightarrow & P^2G'_1 & \rightarrow & P^2F'_0 & \xrightarrow {\pi}& P^2G_t & \rightarrow & 0 \\
 {} & {} & \downarrow & {} & \downarrow & {} & \downarrow & {} & {} \\
0 & \rightarrow & G'_1 & \rightarrow & F'_0 & \xrightarrow{\pi} & G_t & \rightarrow & 0 \\
{} & {} & \downarrow & {} & \downarrow & {} & \downarrow & {} & {} \\
0 & \rightarrow & (G'_1)_P & \rightarrow & (F'_0)_P & \xrightarrow{\pi_P} & (G_t)_P & \rightarrow & 0 \\
{} & {} & \downarrow & {} & \downarrow & {} & \downarrow & {} & {} \\
{} & {} & 0 & {} & 0 & {} & 0 & {} & {} 
\end{matrix}\]
Since $(G'_1)_P\subseteq P(F'_0)_P$, we have that $G'_1\subseteq PF'_0$. But, since $G'_1$ is finitely generated, after possibly multiplying $t$ with and element of $R\setminus P$, we may assume that $G'_1\subseteq IF'_0$ and that $G'_1$ has a generating system $\{g'_1,\dots,g'_q\}$ whose image in $(G'_1)_P$ is a minimal generating system of $(G'_1)_P$ with $q$ elements. Let $F'_1$ a free $R_t$-module with basis $\{e'_1,\dots,e'_q\}$, $u'_1:F'_1\rightarrow F'_0$ defined by $u'_1(e'_k)=g'_k$ for $k=1,\dots,q$ and $G'_2=\ker(u'_1)$. Again, for a suitable $t\in R\setminus P$ we may assume that $G'_2$ is contained in $IF'_1$ and has a generating system whose image in $(G'_2)_P$ is a minimal generating system with the same cardinal. Since $I_t^2=0$, it follows that $G'_2=IF'_1$. Consequently, if $I_t$ is generated by $\{r_1,\dots,r_n\}$ then $G'_2$ is generated $\{r_ie'_k\mid 1\leq i\leq n,\ 1\leq k\leq q\}$. Let $F'_2$ be a free $R_t$-module with basis $\{\epsilon_{i,k}\mid 1\leq i\leq n,\ 1\leq k\leq q\}$, $u'_2:F'_2\rightarrow F'_1$ be the homomorphism defined by $u'_2(\epsilon_{i,k})=r_ie'_k$ for $i=1,\dots,n$ and $k=1,\dots,q$ and $G'_3=\ker(u'_2)$. Since $G'_3$ is finitely generated, as above, for a suitable $t\in R\setminus P$, we get that $G'_3=I_tF'_2$. We easily deduce that $I_t=(0:_{R_t}r_i)$ for each $i=1,\dots,n$. Now, let $F'_3$ be a free $R_t$-module of rank $qn^2$ and $u'_3:F'_3\rightarrow F'_2$ be the homomorphism defined like $u'_2$. Then we get that $\ker(u'_3)=IF'_3$ and it is finitely generated. So, for a suitable $t_P\in R\setminus P$ we have $\lambda_{R_{t_P}}(G_{t_P})\geq 4$.  
\end{proof}

With a similar proof as the one of \cite[Corollary II.13]{Cou03}, and by using Proposition \ref{P:localdim} we get the following theorem.

\begin{theorem}\label{T:th2}
Let $R$ be a fqf-ring. Assume that $R_P$ is either an integral domain or a non-coherent ring for each maximal ideal $P$ which is not an isolated point of $\mathrm{Max}\ R$. Then $\lambda\mathrm{-dim}(R)\leq 2$.
\end{theorem}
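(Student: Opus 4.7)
The plan is to mimic the previous theorem's argument (which produced $\lambda\mathrm{-dim}(R)\leq 3$) together with the scheme of \cite[Corollary II.13]{Cou03}, but starting from a finitely $2$-presented module and targeting the sharper bound. Let $G$ be an $R$-module with a presentation
\[F_2\xrightarrow{u_2} F_1\xrightarrow{u_1} F_0\rightarrow G\rightarrow 0,\]
the $F_i$ being free of finite rank. It suffices to show $\ker(u_2)$ is finitely generated, for then iterating this argument yields a finite $n$-presentation of $G$ for every $n$, hence $\lambda_R(G)=\infty$.

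For each maximal ideal $P$ of $R$ I would produce $t_P\in R\setminus P$ such that $\ker((u_2)_{t_P})$ is finitely generated; then $\mathrm{Max}\,R=\bigcup_P D(t_P)$ and the quasi-compactness of $\mathrm{Max}\,R$ reduce the situation to a finite cover, after which the finite generation of $\ker(u_2)$ itself follows by the standard patching argument already employed in the previous theorem.

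For isolated points $P$ of $\mathrm{Max}\,R$ the construction of $t_P$ carries over verbatim from the previous theorem: the pure-ideal $P^2$ together with the (approximate) idempotent manipulation yields $t_P$ with $D(t_P)=\{P\}$, and the finite generation obtained there already covers $\ker(u_2)_{t_P}$. If $P$ is not isolated, the hypothesis offers two subcases. When $R_P$ is a domain, Theorem \ref{T:localfqp} forces it to be a valuation domain (the nilradical of a local fqp-ring that is a domain is zero), and the argument of \cite[Corollary II.13]{Cou03} supplies the required $t_P$. When $R_P$ is non-coherent and a chain ring, the same reference still applies. When $R_P$ is non-coherent but not a chain ring, $R_P$ is a local fqp-ring with nontrivial nilradical, so Proposition \ref{P:localdim}, parts (3) and (4), ensures that the finitely $2$-presented module $G_P$ is either free or of projective dimension at most~$1$. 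In either case $(\ker u_1)_P$ is a finitely generated projective, hence free, $R_P$-module; by \cite[Chapitre 2, \S 5, 1, Corollaire de la proposition 2]{Bou61} there exists $t_P\in R\setminus P$ with $(\ker u_1)_{t_P}$ free over $R_{t_P}$, so that the short exact sequence
\[0\rightarrow\ker(u_2)_{t_P}\rightarrow(F_2)_{t_P}\rightarrow(\ker u_1)_{t_P}\rightarrow 0\]
splits and $\ker(u_2)_{t_P}$ is finitely generated.

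The main obstacle I expect is to verify that the chain-ring portion of the argument from \cite[Corollary II.13]{Cou03} extends to non-integral chain rings under the non-coherence hypothesis alone: in the arithmetical setting of that corollary, each $R_P$ at a non-isolated point was automatically a valuation domain, whereas here one must handle chain rings with zerodivisors and exploit non-coherence as a substitute for the absence of torsion. Once that subcase is dispatched, the remainder (isolated points, non-chain local fqp via Proposition \ref{P:localdim}, and the final quasi-compactness patching) proceeds in parallel to the preceding theorem.
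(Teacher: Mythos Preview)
Your global strategy and your handling of the non-isolated maximal ideals are essentially what the paper does (and your case-split there is in fact more explicit than the paper's terse ``by Proposition~\ref{P:localdim}''). The genuine gap is at the isolated points. The $P^2$ pure-ideal manipulation in the preceding theorem was invoked only in the very special situation where $P$ is simultaneously a \emph{minimal} prime, $R_P$ is not a chain ring, and $PR_P$ is finitely generated (so that $R_P$ is Artinian with $P^2R_P=0$); it is this last fact that makes $P^2$ a pure ideal of $R$. A general isolated maximal ideal need satisfy none of these conditions: already when $R$ is local of Krull dimension $\geq 1$, its unique maximal ideal $P$ is isolated in $\mathrm{Max}\,R$, yet $P$ is not minimal and $P^2$ has no reason to be pure, so that argument does not ``carry over verbatim''.

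What the paper uses instead is the much simpler observation that if $\{P\}$ is open in $\mathrm{Max}\,R$ then one can choose $t_P\in R\setminus P$ with $D(t_P)\cap\mathrm{Max}\,R=\{P\}$; then $R_{t_P}$ has a unique maximal ideal, hence is local and coincides with $R_P$. Theorem~\ref{T:localdim} now gives $\lambda\text{-}\dim(R_{t_P})\leq 2$ directly, so $\ker((u_2)_{t_P})$ is finitely generated with no further work. Once you replace your isolated-point paragraph by this argument, the rest of your outline goes through.
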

\begin{proof}
Let $G$ be an $R$-module of finite $2$-presentation. So, there exists an exact sequence
\[F_2\xrightarrow{u_2} F_1\xrightarrow{u_1} F_0\rightarrow G\rightarrow 0,\]
where $F_i$ is free of finite rank for $i=0,1,2$. Let $G_i$ be the image of $u_i$ for $i=1,2$.

We do as in the proof of the previous theorem. First suppose that $P$ is a non-isolated point of $\mathrm{Max}\ R$. In this case $(G_1)_P$ is free over $R_P$ by Proposition \ref{P:localdim}. Since $G_1$ is finitely presented, there exists $t_P\in R\setminus P$ such that $(G_1)_{t_P}$ is free over $R_{t_P}$ by \cite[Chapitre 2, \S 5, 1, Corollaire de la proposition 2]{Bou61}. It follows that $(G_2)_{t_P}$ is projective. So, $\ker((u_2)_{t_P})$ is finitely generated. Now assume that $P$ is isolated. There exists $t_P\in R\setminus P$ such that $R_P\cong R_{t_P}$. By Theorem \ref{T:localdim} $\ker((u_2)_{t_P})$ is finitely generated.
\end{proof}

Example \ref{E:ex3} and the following show that the assumption "$R_P$ is a non-coherent ring" cannot be removed in Theorem \ref{T:th2}.

\begin{example}
Let $A$ be a von Neumann regular ring which is not self-injective, $H$ the injective hull of $A$, $x\in H\setminus A$, $E=A+Ax$ and $R=A\propto E$. Then:
\begin{enumerate}
\item  $R$ is a fqf-ring which is not an fqp-ring;
\item for each maximal ideal $P$, $R_P$ is Artinian;
\item $\lambda\mathrm{-dim}(R)=3$.
\end{enumerate} 
\end{example}
\begin{proof}
Let $N=\{(0, y)\mid y\in E\}$, $a=(0,1)$ and $b=(0,x)$.

$(1)$. See \cite[Example 4.6]{Cou15}.

$(2)$. If $P$ is a maximal ideal of $R$ then $R_P$ is the trivial ring extension of the field $A_{P'}$ by the finite dimensional vector space $E_{P'}$ where $P'=P/N$. Hence $R_P$ is Artinian.

$(3)$. Consider the following free resolution of $R/aR$:
\[R^2\xrightarrow{u_2} R\xrightarrow{u_1} R\rightarrow R/Ra\rightarrow 0\]
where $u_2((r,s))=ra+sb$ for each $(r,s)\in R^2$ and $u_1(r)=ra$ for each $r\in R$. We easily check that this sequence of $R$-modules is exact. The $A$-module $E$ is not finitely presented, else $E/A$ is finitely presented and, since each exact sequence of $A$-modules is pure, $A$ is a direct summand of $E$ which contradicts that $A$ is essential in $E$. Consequently, $N$, which is the image of $u_2$, is not finitely presented. So, $\lambda_R(R/Ra)=2$ and $\lambda\mathrm{-dim}(R)=3$. 
\end{proof}



\end{document}